\documentclass[11pt]{amsart}
\setlength{\hoffset}{-1.5cc}
\setlength{\textwidth}{31cc}
\usepackage{amsmath,amscd,amssymb, amsthm, graphics, rotating}

\newcommand{\Z}{{\mathbb Z}}
\newcommand{\Q}{{\mathbb Q}}

\newcommand{\D}{{\mathbb D}}
\newcommand{\C}{{\mathbb C}}

\newcommand{\trans}[1]{\,^t\hspace{-0.1em}{#1}}
\numberwithin{equation}{section}
\newtheorem{thm}{Theorem}[section]
\newtheorem{prop}[thm]{Proposition}
\newtheorem{cor}[thm]{Corollary}
\newtheorem{lemma}[thm]{Lemma}

\newtheorem{defn}[thm]{Definition}

\newcommand{\leg}[2]{\genfrac{(}{)}{}{}{#1}{#2}}

\title[Ramanujan congruences for Siegel modular forms]{Ramanujan congruences for Siegel modular forms}

\author{Michael Dewar}
\address{Department of Mathematics\\University of Illinois\\ Urbana, IL 61801\\USA}
\email{mdewar2@math.uiuc.edu}

\author{Olav K. Richter}
\address{Department of Mathematics\\University of North Texas\\ Denton, TX 76203\\USA}
\email{richter@unt.edu}

\thanks{The paper was written while the second author was in residence at RWTH Aachen University and at the Max Planck Institute for Mathematics in Bonn.  He is grateful for the hospitality of each institution and he thanks Aloys Krieg in particular for providing a stimulating research environment at RWTH Aachen University.}

\subjclass[2000]{Primary 11F33; Secondary 11F46, 11F50}

\begin{document}
\begin{abstract}
We determine conditions for the existence and non-existence of Ra- manujan-type congruences for Jacobi forms. We extend these results to Siegel modular forms of degree $2$ and as an application, we establish Ramanujan-type congruences for explicit examples of Siegel modular forms.
\end{abstract}

\maketitle


\section{Introduction and statement of results}

Congruences in the coefficients of automorphic forms have been the subject of much study.  A famous early example involves the partition function $p(n)$ which counts the number of ways of writing $n$ as a sum of non-increasing positive integers.  Ramanujan established
\begin{align}
p(5n+4)&\equiv 0 \pmod{5}\nonumber\\
p(7n+5)&\equiv 0 \pmod{7}\label{ram congs}\\
p(11n+6)&\equiv 0\pmod{11}\nonumber\,,
\end{align}
which are now simply called {\it Ramanujan congruences}.  More generally, an elliptic modular form with Fourier coefficients $a(n)$ is said to have a {\em Ramanujan-type congruence at $b\pmod{p}$} if $a(pn+b)\equiv 0\pmod{p}$, where $p$ is a prime.  Ahlgren and Boylan~\cite{Ahl-Boy-Invent03} build on work by Kiming and Olsson~\cite{Kim-Ols-Arch92} to prove that (\ref{ram congs}) are the only such congruences for the partition function.  Nevertheless, congruences of non-Ramanujan-type also exist, as Ono~\cite{Ono-Ann00} demonstrates.  (See also Chapter $5$ of Ono~\cite{Ono} for an account of congruences for the partition function.)  The existence and non-existence of Ramanujan-type congruences for elliptic modular forms have recently been studied by Cooper, Wage, and Wang~\cite{Coo-Wag-Wan-IJNT08} and Sinick~\cite{Sinick}.  See also~\cite{Dew-09}, which generalizes~\cite{Ahl-Boy-Invent03} to provide a method to find all Ramanujan-type congruences in certain weakly holomorphic modular forms. 

In this paper, we investigate Ramanujan-type congruences for Siegel modular forms of degree $2$.  Throughout, $Z:=\left(\begin{smallmatrix} \tau & z \\ z & \tau'\end{smallmatrix}\right)$ is a variable in the Siegel upper half space of degree $2$, $q:=e^{2\pi i \tau}$, $\zeta:=e^{2 \pi i z}$, $q':=e^{2\pi i \tau'}$, and $\mathbb{D}:=(2\pi i)^{-2}\left(4\frac{\partial}{\partial \,\tau}\frac{\partial}{\partial \,\tau'}-\frac{\partial^2}{\partial\,z^2}\right)$ is the generalized theta operator, which acts on Fourier expansions of Siegel modular forms as follows:
$$
\mathbb{D}\left(\sum_{\begin{smallmatrix}T=\trans{T}\geq 0 \\ T\, even\end{smallmatrix}}a(T)e^{\pi i\,tr(TZ)}\right)=\sum_{\begin{smallmatrix}T=\trans{T}\geq 0 \\ T\, even\end{smallmatrix}}\det(T)a(T)e^{\pi i\,tr(TZ)},
$$ 
where $tr$ denotes the trace, and where the sum is over all symmetric, semi-positive definite, integral, and even $2\times 2$ matrices.  Additionally, we always let $p\geq 5$ be a prime and (for simplicity) we always assume that the weight $k$ is an even integer.

\begin{defn}
A Siegel modular form $F=\sum a(T)e^{\pi i\,tr(TZ)}$ with $p$-integral rational coefficients has a Ramanujan-type congruence 
at $b \pmod{p}$ if $a(T) \equiv 0 \pmod{p}$ for all $T$ with $\det T \equiv b \pmod{p}$.
\end{defn}

Note that such congruences at $0\pmod{p}$ have already been studied in \cite{C-C-R-Siegel} and our main result in this paper complements \cite{C-C-R-Siegel} by giving the case $b\not\equiv 0\pmod{p}$.

\vspace{1ex}

\begin{thm}
\label{main}
Let $\displaystyle F(Z)= \sum_{\begin{smallmatrix} n,r,m \in \mathbb{Z}\\ n,m,4nm-r^2\geq 0 \end{smallmatrix}}A(n,r,m)q^n \zeta^r q'^m$ be a 
Siegel modular form of degree $2$ and even weight $k$ with $p$-integral rational coefficients and let $b\not\equiv 0\pmod{p}$.  
Then $F$ has a Ramanujan-type congruence at $b \pmod{p}$ if and only if 
\begin{equation}
\label{Rama-Siegel}
\mathbb{D}^{\frac{p+1}{2}}(F)\equiv-\leg{b}{p}\mathbb{D}(F)\pmod{p},
\end{equation}
where $\leg{\cdot}{p}$ is the Legendre symbol.  Moreover, if $p>k$, $p\not= 2k-1$, and there exists an $A(n,r,m)$ with 
$p \nmid \gcd(n,m)$ such that $A(n,r,m)\not \equiv 0 \pmod{p}$, then $F$ does not have a Ramanujan-type congruence at $b \pmod{p}$. 
\end{thm}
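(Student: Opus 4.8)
The plan is to derive the non-existence from the equivalence in the first part of the theorem by a weight-filtration argument, after reducing to a single Fourier--Jacobi coefficient. Suppose, toward a contradiction, that $F$ has a Ramanujan-type congruence at $b\pmod p$; by the equivalence~(\ref{Rama-Siegel}) I must contradict $\mathbb{D}^{\frac{p+1}{2}}(F)\equiv-\leg{b}{p}\mathbb{D}(F)\pmod p$. Writing $D:=4nm-r^2=\det T$ and applying Euler's criterion $D^{\frac{p-1}{2}}\equiv\leg{D}{p}\pmod p$, the $(n,r,m)$-coefficient of $\mathbb{D}^{\frac{p+1}{2}}(F)+\leg{b}{p}\mathbb{D}(F)$ is $D\bigl(\leg{D}{p}+\leg{b}{p}\bigr)A(n,r,m)$. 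Because $p\geq 5$, this is divisible by $p$ exactly when $A(n,r,m)\equiv 0$, or $p\mid D$, or $\leg{D}{p}=-\leg{b}{p}$. Thus~(\ref{Rama-Siegel}) is equivalent to the assertion that every coefficient with $A(n,r,m)\not\equiv 0\pmod p$ and $p\nmid D$ satisfies $\leg{D}{p}=-\leg{b}{p}$; that is, the nonzero coefficients of $F$ with $p\nmid D$ all lie in a single quadratic class of discriminants.

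Next I would localize. Using the symmetry $\tau\leftrightarrow\tau'$ (that is, $n\leftrightarrow m$), the hypothesis supplies a coefficient $A(n_0,r_0,m_0)\not\equiv 0\pmod p$ with $p\nmid m_0$; I focus on the Fourier--Jacobi coefficient $\phi:=\sum_{n,r}A(n,r,m_0)q^n\zeta^r$, a Jacobi form of weight $k$ and index $m_0$, and on $G:=\mathbb{D}\phi$. By construction $G$ is supported (mod $p$) only on terms with $p\nmid D$, and by the previous paragraph all of its nonzero coefficients have $\leg{D}{p}=-\leg{b}{p}$; hence $\mathbb{D}^{\frac{p-1}{2}}$ multiplies every coefficient of $G$ by the single constant $-\leg{b}{p}$, so that $G$ satisfies the eigenform relation $\mathbb{D}^{\frac{p-1}{2}}(G)\equiv-\leg{b}{p}\,G\pmod p$. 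The condition $p\nmid m_0$ makes the binary form $4nm_0-r^2$ nondegenerate mod $p$, and this together with $p>k$ and the classification of mod-$p$ singular Jacobi forms yields a nonzero coefficient of $\phi$ with $p\nmid D$; hence $G\not\equiv 0\pmod p$ and the eigenform relation is not vacuous. Since a Ramanujan congruence for $F$ at $b$ restricts to one for $\phi$, it suffices to contradict the existence of such a $G$.

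The crux --- and the step I expect to be the main obstacle --- is the weight-filtration input. Let $\omega(\cdot)$ denote the mod-$p$ filtration. The key lemma, established for the Jacobi theta operator, is that $\mathbb{D}$ strictly raises $\omega$ by a fixed positive amount at each iteration, away from a short list of exceptional weights. Granting this, $\omega\bigl(\mathbb{D}^{\frac{p-1}{2}}G\bigr)>\omega(G)$, which contradicts the equality of filtrations forced by $\mathbb{D}^{\frac{p-1}{2}}(G)\equiv-\leg{b}{p}G$. The hypotheses $p>k$ and $p\neq 2k-1$ are exactly what is needed here: since $\omega(\phi)\leq k<p$, they ensure that the filtrations of $G$ and of all its iterates $\mathbb{D}^{j}(G)$ for $0\leq j<\tfrac{p-1}{2}$ avoid the exceptional weights, so that no filtration drop occurs over the full chain. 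Carefully proving the strict weight-raising lemma for $\mathbb{D}$ and pinning down its exceptional set --- thereby verifying that $p>k$ and $p\neq 2k-1$ are precisely the conditions that forbid a drop --- is the technical heart of the argument; the remaining steps are the coefficient computation above and the Fourier--Jacobi reduction.
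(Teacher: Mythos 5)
Your proposal has two genuine gaps. First, you never actually prove the equivalence that the whole argument leans on. Your coefficient computation correctly shows that \eqref{Rama-Siegel} is equivalent to the vanishing mod $p$ of $A(T)$ for \emph{every} $T$ with $\leg{\det T}{p}=\leg{b}{p}$, i.e.\ to Ramanujan-type congruences at \emph{all} $b'$ in the square class of $b$. That gives the implication ``\eqref{Rama-Siegel} $\Rightarrow$ congruence at $b$'' for free, but the substantive direction is the converse: that a congruence at the single residue $b$ already forces congruences at the whole square class. Every later step of your argument (in particular the claim that all surviving coefficients of $G$ have $\leg{D}{p}=-\leg{b}{p}$, hence the eigenform relation) presupposes this, and you assume it rather than prove it. In the paper this is Proposition \ref{KO prop}, whose proof is not a coefficient manipulation: it expands $\phi=\sum_j f_j(\phi_{-2,1})^j(\phi_{0,1})^{m-j}$, uses the linear independence of the $(\phi_{-2,1})^j(\phi_{0,1})^{m-j}$ over $M_*^{(1)}$, and then invokes the direct-sum decomposition of elliptic modular forms mod $p$ graded by weight mod $p-1$ to split the single relation $\sum_{i=1}^{p-1}b^{p-1-i}L_m^i\phi\equiv 0$ into the pairwise relations $F_{i+(p-1)/2,j}\equiv-\leg{b}{p}F_{i,j}$.

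Second, the step you yourself flag as the crux is false as stated. By Proposition \ref{Jacobi-filt} (with $p\nmid m_0$) the exceptional filtrations are those $\equiv\frac{p+1}{2}\pmod p$, and starting from $\Omega(G)=k+p+1\equiv k+1\pmod p$ each non-exceptional application of $L_{m_0}$ advances the filtration by $1$ mod $p$; it therefore reaches $\frac{p+1}{2}$ after $\frac{p-1}{2}-k$ steps, which lies strictly inside your chain $0\le j<\frac{p-1}{2}$ for every $k$ with $4\le k\le\frac{p-1}{2}$ (e.g.\ $k=4$, $p=11$ or $13$). So a filtration drop \emph{does} occur before $L_{m_0}^{(p-1)/2}G$, and the strict inequality $\Omega(L_{m_0}^{(p-1)/2}G)>\Omega(G)$ cannot be extracted this way; indeed drops are unavoidable, since over a full period of $p-1$ steps the filtration must return to its starting value. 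The hypotheses $p>k$ and $p\ne 2k-1$ do not forbid drops; they only guarantee $\Omega(\phi)=k$ and $\Omega(L_{m_0}\phi)=k+p+1$ (whence $G\not\equiv 0$, which is also how the paper avoids your appeal to ``mod-$p$ singular Jacobi forms''). The paper's actual contradiction analyses \emph{where} the drops occur: the relation $L_{m_0}^{(p+1)/2}\phi\equiv-\leg{b}{p}L_{m_0}\phi$ forces the heat cycle to have exactly two low points, each of filtration $\equiv 2\pmod p$ (Corollary \ref{KO cor}), while the low point of the cycle of $\phi$ has filtration $k$ or $k+p+1$, and neither $k\equiv 2$ nor $k+1\equiv 2\pmod p$ can hold for $4\le k<p$ (Proposition \ref{non-Jacobi}). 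Your reduction to a single Fourier--Jacobi coefficient with $p\nmid m_0$ via the $\tau\leftrightarrow\tau'$ symmetry does match the paper; the two central lemmas are what is missing.
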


\vspace{1ex}

{\bf Remarks:} 

{\it
\begin{enumerate}

\item
If $F$ in Theorem \ref{main} has a Ramanujan-type congruence at $b\not\equiv 0\pmod{p}$, then it also has such congruences at $b' \pmod{p}$ whenever $\leg{b}{p}=\leg{b'}{p}$, i.e, there are $\frac{p-1}{2}$ or $p-1$ such congruences.
\item
The condition $p\not= 2k-1$ in the second part of Theorem \ref{main} is necessary since there are Siegel modular forms $F$ of weight $\frac{p+1}{2}$ such that $F\not\equiv 0\pmod{p}$ and $\mathbb{D}(F)\equiv 0 \pmod{p}$.  For example, let $F$ be the Siegel Eisenstein series of weight 4 normalized by $a\left(\left(\begin{smallmatrix} 0 & 0 \\ 0 & 0\end{smallmatrix}\right)\right)=1$ and take $p=7$.  
Such Siegel modular forms satisfy (\ref{Rama-Siegel}) for any $b$ and hence have Ramanujan-type congruences at all $b \not\equiv 0\pmod{p}$.  
The condition that there exists an $A(n,r,m)\not \equiv 0 \pmod{p}$ where $p \nmid \gcd(n,m)$ is also necessary since there exist 
Siegel modular forms $F$ of weight $p-1$ such that $F\equiv 1\pmod{p}$ (see Theorem 4.5 of \cite{Na-MathZ00}).  Such forms have Ramanujan-type congruences at all $b \not\equiv 0\pmod{p}$.
\end{enumerate}
}

\vspace{1ex}

In Section $2$, we investigate congruences of Jacobi forms and, in particular, we establish criteria for the existence and non-existence of Ramanujan-type congruences for Jacobi forms.  In Section $3$, we use such congruences for Jacobi forms to prove Theorem \ref{main}.  Using our results, it is now a finite computation to find Ramanujan-type congruences at all $b\not\equiv 0\pmod{p}$ for any Siegel modular form.  We give several explicit examples.  Finally, we present a construction of Siegel modular forms that have Ramanujan-type congruences at $b \pmod{p}$ for arbitrary primes $p\geq 5$.

\vspace{1ex}

\section{Congruences and filtrations of Jacobi forms}

Let $J_{k,m}$ be the vector space of Jacobi forms of even weight $k$ and index $m$ (for details on Jacobi forms, see Eichler and Zagier~\cite{EZ}).  The heat operator $L_m:=(2 \pi i)^{-2}\left(8 \pi im\frac{\partial}{\partial\tau}-\frac{\partial^2}{\partial z^2}\right)$ is a natural tool in the theory of Jacobi forms and plays an important role in this Section.  In particular, if $\phi=\sum c(n,r) q^{n}\zeta^{r}$, then 
\begin{equation}
\label{heat-action}
L_m\phi:=L_{m}(\phi) = \sum (4nm-r^{2}) c(n,r)q^{n}\zeta^{r}.
\end{equation}  Set 
$$
\widetilde{J}_{k,m}:=\big\{\phi\pmod{p}\, :\, \phi(\tau,z)\in J_{k,m}\cap\Z_{(p)}[[q,\zeta]]\big\},
$$
where $\Z_{(p)}:=\Z_p\cap\Q$ denotes the local ring of $p$-integral rational numbers.  If $\phi \in \widetilde{J}_{k,m}$, then we denote its filtration modulo $p$ by
$$
\Omega\big(\phi\big):=\inf\left\{k\,:\, \phi \pmod{p}\,\in \widetilde{J}_{k,m}\right\}.
$$
\noindent
Recall the following facts on Jacobi forms modulo $p$:

\vspace{1ex}

\begin{prop}[Sofer~\cite{Sof-JNT97}]\label{wt for cong}
\label{Sofer}
Let $\phi(\tau,z)\in J_{k,m}\cap\Z[[q,\zeta]]$ and $\psi(\tau,z)\in J_{k',m'}\cap\Z[[q,\zeta]]$
such that $0\not\equiv\phi\equiv \psi \pmod{p}$.
Then $k\equiv k' \pmod{p-1}$ and $m=m'$.
\end{prop}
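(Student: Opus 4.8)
The plan is to prove the two conclusions separately: the index equality $m=m'$ by an elementary argument on the Fourier support, and the weight congruence $k\equiv k'\pmod{p-1}$ by reducing to the classical theory of elliptic modular forms modulo $p$.

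First I would prove $m=m'$. Write $c(n,r)$ for the common reduction $c_\phi(n,r)\equiv c_\psi(n,r)\pmod p$, where $c_\phi,c_\psi$ denote the Fourier coefficients of $\phi,\psi$. Two standard properties of a Jacobi form of index $m$ (see \cite{EZ}) enter: its coefficients are supported on $4nm\geq r^{2}$, and the elliptic transformation $z\mapsto z+\lambda\tau$ forces the periodicity $c_\phi(n,r)=c_\phi(n+\lambda r+m\lambda^{2},\,r+2m\lambda)$ for all $\lambda\in\Z$ (and likewise with $m'$ for $\psi$). Since $\phi\not\equiv 0$, fix $(n_{0},r_{0})$ with $c(n_{0},r_{0})\not\equiv 0\pmod p$. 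Suppose $m\neq m'$; interchanging $\phi$ and $\psi$ if necessary, assume $m<m'$. Applying the index-$m'$ periodicity of $\psi$ to $(n_{0},r_{0})$ produces, for every $\lambda$, coefficients $c(n_\lambda,r_\lambda)=c(n_{0},r_{0})\not\equiv 0$ with $n_\lambda=n_{0}+\lambda r_{0}+m'\lambda^{2}$ and $r_\lambda=r_{0}+2m'\lambda$. As these reductions are nonzero, the corresponding coefficients of $\phi$ are nonzero, so the index-$m$ support condition gives $4n_\lambda m-r_\lambda^{2}\geq 0$ for all $\lambda$. A direct computation yields
\[
4n_\lambda m-r_\lambda^{2}=(4mn_{0}-r_{0}^{2})+4(m-m')\lambda\,(r_{0}+m'\lambda),
\]
whose right-hand side tends to $-\infty$ as $\lambda\to+\infty$ once $m<m'$. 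This contradiction forces $m=m'$.

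For the weight, once $m=m'$ it remains to show $k\equiv k'\pmod{p-1}$, and here I would reduce to the known statement for elliptic modular forms (Serre, Swinnerton-Dyer): two nonzero modular forms on $\SL_2(\Z)$ that are congruent modulo $p$ have weights congruent modulo $p-1$. The bridge from Jacobi to elliptic forms is the theta decomposition $\phi=\sum_{\mu\bmod 2m}h_\mu(\tau)\,\theta_{m,\mu}(\tau,z)$, under which the vector $(h_\mu)_\mu$ is a vector-valued (equivalently, half-integral weight) modular form of weight $k-\tfrac12$ for the Weil representation attached to the index $m$. This decomposition sorts the Fourier coefficients according to $r\bmod 2m$, so it loses no information modulo $p$: from $\phi\equiv\psi$ and $\phi\not\equiv 0$ one gets $h_\mu(\phi)\equiv h_\mu(\psi)\pmod p$ for every $\mu$, with at least one $h_\mu\not\equiv 0$. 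Invoking the analogue of the Serre--Swinnerton-Dyer weight bound in the half-integral (vector-valued) setting, whose key input is the Hasse invariant $E_{p-1}\equiv 1\pmod p$ of weight $p-1$, one concludes $\bigl(k-\tfrac12\bigr)\equiv\bigl(k'-\tfrac12\bigr)\pmod{p-1}$, that is, $k\equiv k'\pmod{p-1}$.

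I expect the weight congruence to be the main obstacle. The index argument is entirely elementary, but the weight statement rests on the modulo $p$ filtration theory, and the naive shortcut---specializing $z=0$ to obtain $\phi(\tau,0)\in M_k$ and comparing---can fail modulo $p$: when the index is large relative to $p$, the map $r\mapsto r^{\nu}\bmod p$ collapses residues, so every Taylor coefficient at $z=0$ may vanish modulo $p$ while $\phi\not\equiv 0$. Replacing the $z=0$ Taylor expansion by the theta decomposition, which respects the coefficients exactly, is precisely what repairs this gap, at the cost of importing the half-integral/vector-valued version of the weight-filtration theorem. A more self-contained alternative is to develop directly the Jacobi analogue of Swinnerton-Dyer's structure theorem for the bigraded algebra $\bigoplus_{k,m}J_{k,m}$ modulo $p$, where again $E_{p-1}\equiv 1$ shows that the weight is well defined only modulo $p-1$ while the index is an exact invariant.
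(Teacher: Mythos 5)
The paper offers no proof of this proposition at all---it is quoted from Sofer's work as a known fact---so there is no internal argument to compare yours against; I can only judge the proposal on its merits. Your proof that $m=m'$ is complete and correct: combining the index-$m'$ elliptic periodicity $c(n,r)=c(n+\lambda r+m'\lambda^{2},\,r+2m'\lambda)$ with the index-$m$ support condition $4nm\geq r^{2}$ and the coefficientwise congruence does force $4n_\lambda m-r_\lambda^{2}=(4mn_{0}-r_{0}^{2})+4(m-m')\lambda(r_{0}+m'\lambda)\to-\infty$ when $m<m'$, a contradiction. This half needs no outside input.

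The weight congruence, however, is not yet proved. You reduce it to ``the analogue of the Serre--Swinnerton-Dyer weight bound in the half-integral (vector-valued) setting,'' but that analogue \emph{is} the hard content of the statement, and you leave it as a black box. Moreover, the theta components $h_\mu$ transform under the Weil representation attached to a discriminant form of order $2m$, i.e.\ they live at level $4m$; the filtration theory you want to import is standard only when the level is prime to $p$, while the proposition carries no hypothesis $p\nmid m$. So as written the reduction does not cover all cases it claims to. A route that avoids both problems, and uses only machinery already present in this paper, is the Eichler--Zagier decomposition invoked in the proof of Proposition \ref{KO prop}: for even weight, $\phi=\sum_{j=0}^{m}f_{j}\,(\phi_{-2,1})^{j}(\phi_{0,1})^{m-j}$ with uniquely determined $f_{j}\in M_{k+2j}^{(1)}$ having $p$-integral coefficients. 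Once $m=m'$ is known, the mod $p$ linear independence of the products $(\phi_{-2,1})^{j}(\phi_{0,1})^{m-j}$ over $M_{*}^{(1)}$ (visible already from their $q^{0}$-terms $(x-2)^{j}(x+10)^{m-j}$ with $x=\zeta+\zeta^{-1}$, which are independent for $p\geq 5$) turns $\phi\equiv\psi\pmod{p}$ into $f_{j}(\phi)\equiv f_{j}(\psi)\pmod{p}$ for every $j$, with some $f_{j}\not\equiv 0$; the classical level-one Serre--Swinnerton-Dyer theorem then gives $k+2j\equiv k'+2j\pmod{p-1}$, hence $k\equiv k'$. If you prefer to keep the theta-decomposition route, you must either prove the vector-valued weight theorem you cite or add the hypothesis $p\nmid m$.
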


\vspace{1ex}

\begin{prop}[\cite{heat}]
\label{Jacobi-filt}
If $\phi(\tau,z)\in J_{k,m}\cap\Z[[q,\zeta]]$, then $L_m \phi \pmod{p}\in \widetilde{J}_{k+p+1,m}$.  Moreover, we have
\begin{equation*}
\label{heat-filt}
\Omega\left(L_m \phi\right)\leq \Omega\left(\phi\right)+p+1,
\end{equation*}
with equality if and only if $p\,\nmid \, \left(2\Omega\left(\phi\right)-1\right)m$.
\end{prop}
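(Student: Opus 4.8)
The plan is to transplant the Serre--Swinnerton-Dyer theory of the theta operator on elliptic modular forms modulo $p$ to the Jacobi setting, using Proposition \ref{Sofer} to control the graded structure of Jacobi forms modulo $p$. The essential input is the covariant (Serre-type) heat operator: for $\phi\in J_{k,m}$ there is a nonzero rational constant $c$, a $p$-unit for $p\geq 5$, such that
\begin{equation*}
\vartheta_{k,m}\phi:=L_m\phi-c\,(2k-1)\,m\,E_2\,\phi\in J_{k+2,m},
\end{equation*}
where $E_2$ is the quasimodular Eisenstein series of weight $2$. Combined with the classical congruences $E_{p-1}\equiv 1\pmod p$ and $E_2\equiv E_{p+1}\pmod p$ (the latter a genuine elliptic modular form of weight $p+1$), this expresses $L_m$ modulo $p$ through genuine Jacobi forms. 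Note that the appearance of $2k-1$ in the correction term is exactly what will produce the stated equality condition.

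First I would prove the first assertion and the inequality. Reducing the displayed identity modulo $p$ gives
\begin{equation*}
L_m\phi\equiv\vartheta_{k,m}\phi+c\,(2k-1)\,m\,E_{p+1}\,\phi\pmod p,
\end{equation*}
where $\vartheta_{k,m}\phi\in J_{k+2,m}$ and $E_{p+1}\phi\in J_{k+p+1,m}$. Since $E_{p-1}\equiv 1\pmod p$, replacing $\vartheta_{k,m}\phi$ by $E_{p-1}\vartheta_{k,m}\phi\in J_{k+p+1,m}$ leaves the reduction unchanged, so $L_m\phi\pmod p\in\widetilde{J}_{k+p+1,m}$. Because $L_m$ acts diagonally on Fourier coefficients by the integer $4nm-r^2$ (see (\ref{heat-action})), it is well defined modulo $p$ and compatible with filtration: choosing $g\in J_{\Omega(\phi),m}$ with $g\equiv\phi\pmod p$ and applying the above with $k=\Omega(\phi)$ yields $\Omega(L_m\phi)=\Omega(L_m g)\leq\Omega(\phi)+p+1$.

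For the equality criterion, fix $g$ as above with $\Omega(g)=\Omega(\phi)=:k_0$, so that modulo $p$
\begin{equation*}
L_m\phi\equiv E_{p-1}\,\vartheta_{k_0,m}\,g+c\,(2k_0-1)\,m\,E_{p+1}\,g\pmod p,
\end{equation*}
with both terms in $J_{k_0+p+1,m}$. By Proposition \ref{Sofer} the filtration $\Omega(L_m\phi)$ equals $k_0+p+1$ unless this weight-$(k_0+p+1)$ form is congruent to one of weight $k_0+2$, i.e.\ unless it is divisible by the Hasse invariant $A=E_{p-1}$. If $p\mid(2k_0-1)m$, then the second term vanishes modulo $p$ (as $c$ is a $p$-unit), leaving $E_{p-1}\vartheta_{k_0,m}g\equiv\vartheta_{k_0,m}g$, a form of weight $k_0+2$; hence $\Omega(L_m\phi)\leq k_0+2<k_0+p+1$ and equality fails. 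This settles the ``only if'' direction.

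The hard direction, and the main obstacle, is to show that when $p\nmid(2k_0-1)m$ the filtration does not drop. Since the first term $E_{p-1}\vartheta_{k_0,m}g$ is $A$-divisible, a drop would force $E_{p+1}g$ to be $A$-divisible too, that is $\Omega(E_{p+1}g)\leq k_0+2$; I must rule this out whenever $\Omega(g)=k_0$. Equivalently, multiplication by $E_{p+1}$ must not lower the filtration of a form of maximal filtration, and the subtlety is that filtration is only subadditive, so a naive dimension count does not suffice. I would supply this input through the theta-cycle machinery for $L_m$: since $4nm-r^2\in\Z$, Fermat gives $L_m^{\,p}\equiv L_m\pmod p$, so the filtrations $\Omega(L_m^{\,i}\phi)$ are periodic, and balancing the increments (each step raises the filtration by $p+1$ except at a drop) over a full cycle against the total decreases forces every drop to occur precisely at a step where $p\mid(2\,\Omega(L_m^{\,i}\phi)-1)m$. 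Applied to the first step, this yields the required contradiction in the case $p\nmid(2k_0-1)m$ and completes the equivalence.
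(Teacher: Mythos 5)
Note first that the paper does not prove this proposition at all: it is quoted from \cite{heat}. So your attempt has to be measured against the argument in that reference, whose overall strategy (the covariant heat operator $L_m-c(2k-1)mE_2$ landing in $J_{k+2,m}$, together with $E_{p-1}\equiv 1$ and $E_2\equiv E_{p+1}\pmod p$) you have correctly reconstructed. Your proof of membership in $\widetilde{J}_{k+p+1,m}$, of the inequality, and of the ``only if'' direction (that $p\mid(2\Omega(\phi)-1)m$ forces a strict drop) is sound.

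The gap is exactly where you flag ``the hard direction,'' and your proposed fix does not work. You need: if $\Omega(g)=k_0$ and $p\nmid(2k_0-1)m$, then $E_{p+1}g$ is not divisible by $A=E_{p-1}$ in the graded ring mod $p$. Your plan is to get this from periodicity of the heat cycle, but that argument is circular: summing the relation $\Omega(L_m^{i+1}\phi)=\Omega(L_m^{i}\phi)+p+1-s_i(p-1)$ around a cycle only yields the global constraint $\sum_i s_i=p+1$; it says nothing about \emph{where} the drops occur, and in particular cannot exclude a drop at a step with $p\nmid(2\Omega(L_m^i\phi)-1)m$ compensated by smaller drops elsewhere. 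Indeed, the localization of drops to steps with $\Omega\equiv\frac{p+1}{2}\pmod p$ is precisely what Lemma \ref{hc basics lemma} and equation (\ref{s eqn}) of this paper \emph{deduce from} the equality criterion of Proposition \ref{Jacobi-filt}, so it cannot be used to prove it. The missing ingredient is Swinnerton-Dyer's lemma that $\widetilde{E}_{p-1}$ and $\widetilde{E}_{p+1}$ are coprime in $\mathbb{F}_p[\widetilde{E}_4,\widetilde{E}_6]$, applied componentwise through the Eichler--Zagier decomposition $g=\sum_{j=0}^m f_j(\phi_{-2,1})^j(\phi_{0,1})^{m-j}$: from $A\mid E_{p+1}g$ one gets $A\mid E_{p+1}f_j$ for every $j$, hence $A\mid f_j$, hence $A\mid g$, contradicting $\Omega(g)=k_0$. (Relatedly, the structural fact you attribute to Proposition \ref{Sofer} --- that a filtration drop is equivalent to divisibility by $E_{p-1}$ --- is also stronger than the weight congruence actually stated there and needs the full structure theorem for $\widetilde{J}_{*,m}$.) With the coprimality lemma inserted, the rest of your argument goes through.
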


\vspace{1ex}

We will now explore Ramanujan-type congruences for Jacobi forms. 

\begin{defn}
For $\displaystyle \phi(\tau,z) = \sum c(n,r)q^n\zeta^r \in \widetilde{J}_{k,m}$, we say that $\phi$ has a Ramanujan-type congruence at $b\pmod{p}$ if $c(n,r)\equiv 0 \pmod{p}$ whenever $4nm-r^{2}\equiv b\pmod{p}$.
\end{defn}

\vspace{1ex}

Equation (\ref{heat-action}) implies that a Jacobi form $\phi$ has a Ramanujan-type congruence at $0\pmod{p}$ if and only if $L_{m}^{p-1}\phi \equiv \phi \pmod{p}$.  More generally, $\phi$ has a Ramanujan-type congruence at $b\pmod{p}$ if and only if 
\begin{equation*}
L_{m}^{p-1}\left( q^{-\frac{b}{4m}}\phi \right) \equiv  q^{-\frac{b}{4m}}\phi \pmod{p}.
\end{equation*}
Ramanujan-type congruences at $0\pmod{p}$ for Jacobi forms have been considered in \cite{crit, heat}.  The following proposition determines when Ramanujan-type congruences at $b\not\equiv 0 \pmod{p}$ for Jacobi forms exist.

\vspace{1ex}

\begin{prop}\label{KO prop}
Let $\phi\in \widetilde{J}_{k,m}$ and $b\not\equiv 0\pmod{p}$.  Then $\phi$ has a Ramanujan-type congruence at $b \pmod{p}$ if and only if $L_{m}^{\frac{p+1}{2}}\phi \equiv -\leg{b}{p}L_{m}\phi \pmod{p}$.
\end{prop}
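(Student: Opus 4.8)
The plan is to convert the operator congruence into a pointwise statement about the Fourier coefficients, using (\ref{heat-action}) together with Euler's criterion, and then to identify that statement as vanishing on an entire quadratic residue class. First I would abbreviate $D=D(n,r)=4nm-r^{2}$, so that by (\ref{heat-action}) one has $L_{m}^{j}\phi=\sum D^{j}\,c(n,r)q^{n}\zeta^{r}$, and each $L_{m}^{j}\phi$ is again a Jacobi form modulo $p$ of index $m$ by Proposition \ref{Jacobi-filt}; thus $L_{m}^{\frac{p+1}{2}}\phi\equiv-\leg{b}{p}L_{m}\phi$ is an honest congruence inside the theory. Comparing the coefficients of $q^{n}\zeta^{r}$, this congruence is equivalent to $\bigl(D^{\frac{p+1}{2}}+\leg{b}{p}D\bigr)c(n,r)\equiv 0\pmod p$ for every $(n,r)$. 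Euler's criterion gives $D^{\frac{p-1}{2}}\equiv\leg{D}{p}\pmod p$ (both sides being $0$ when $p\mid D$), so the factor equals $D\bigl(\leg{D}{p}+\leg{b}{p}\bigr)$; since $p\geq 5$ and $b\not\equiv 0$, it is a unit exactly when $p\nmid D$ and $\leg{D}{p}=\leg{b}{p}$, and is $0$ otherwise. Hence the congruence holds if and only if $c(n,r)\equiv0$ whenever $p\nmid D(n,r)$ and $\leg{D(n,r)}{p}=\leg{b}{p}$.

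In this form the implication from the operator congruence to the Ramanujan congruence at $b$ is immediate, since $D\equiv b$ forces $\leg{D}{p}=\leg{b}{p}$. The real content is the converse: a congruence at the single class $b$ must be promoted to vanishing on the whole class $\{d:\leg{d}{p}=\leg{b}{p}\}$. Equivalently, writing $S=\{d\bmod p:\ c(n,r)\not\equiv0\text{ for some }(n,r)\text{ with }D(n,r)\equiv d\}$ for the discriminant support of $\phi$, I must show that $S$ is stable under $d\mapsto a^{2}d$ for all $a\in(\Z/p)^{\times}$, so that $S$ is a union of $\{0\}$ with full quadratic residue classes. This is exactly the first Remark after Theorem \ref{main}, specialised to Jacobi forms.

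To establish this stability I would analyse the theta cycle of $\phi$ under $L_{m}$. Let $P(x)=1-(x-b)^{p-1}$ be the indicator of $\{b\}$ and $Q(x)=\tfrac12 x^{p-1}\bigl(1+\leg{b}{p}x^{\frac{p-1}{2}}\bigr)$ the indicator of the class of $b$; then the hypothesis reads $P(L_{m})\phi\equiv0$ and the goal reads $Q(L_{m})\phi\equiv0$. Because $L_{m}^{p}$ and $L_{m}$ act identically on coefficients, only the values of these polynomials on $\Z/p$ matter, and the whole question is governed by the filtrations $\Omega(L_{m}^{i}\phi)$ for $0\le i\le p-1$. Iterating Proposition \ref{Jacobi-filt} determines this cycle: each application of $L_{m}$ raises the filtration by $p+1$ except at the unique step where $p\mid(2\Omega(L_{m}^{i}\phi)-1)m$, and the vanishing $P(L_{m})\phi\equiv0$ forces where that drop occurs. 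A theta-cycle analysis in the spirit of Kiming and Olsson should then yield the eigenrelation $L_{m}^{\frac{p-1}{2}}\phi\equiv-\leg{b}{p}\phi$ off the locus $D\equiv0$, which is the desired identity after applying $L_{m}$ once more. (When $p\mid m$ the discriminant reduces to $-r^{2}$ and the statement can be checked directly.)

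I expect this theta-cycle step to be the main obstacle, and the reason is structural. The substitution $d\mapsto a^{2}d$ that would make the stability of $S$ transparent is realised on Jacobi forms only by the index-raising operator $U_{a}\colon J_{k,m}\to J_{k,a^{2}m}$, which multiplies every discriminant by $a^{2}$ but leaves the space of index $m$; since Proposition \ref{Sofer} forces the index to be rigid modulo $p$, the congruence at $b$ cannot simply be pushed to one at $a^{2}b$ and then pulled back. One is therefore forced to read the stability off the rigidity of the filtrations themselves, which is precisely the delicate point.
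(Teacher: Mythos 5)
Your reduction of the operator congruence to a coefficient statement is correct and matches the content of the paper's first step: by (\ref{heat-action}) and Euler's criterion, $L_{m}^{\frac{p+1}{2}}\phi \equiv -\leg{b}{p}L_{m}\phi\pmod{p}$ is precisely the assertion that $c(n,r)\equiv 0$ whenever $4nm-r^2$ lies in the full square class of $b$, and you rightly isolate the converse --- promoting vanishing on the single residue $b$ to vanishing on its whole square class --- as the entire content of the proposition. But at exactly that point your argument stops being a proof: ``a theta-cycle analysis in the spirit of Kiming and Olsson should then yield the eigenrelation'' is a hope, not a step. Worse, the route you sketch cannot deliver what is needed. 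Proposition \ref{Jacobi-filt} and the heat-cycle bookkeeping control only the \emph{filtrations} $\Omega(L_m^i\phi)$, and equality of filtrations of two forms never implies that the forms are congruent; so no amount of tracking where the filtration drops can produce the identity $L_m^{\frac{p+1}{2}}\phi\equiv-\leg{b}{p}L_m\phi$. In the paper the logical flow is the reverse of what you propose: the heat-cycle structure (Corollary \ref{KO cor}) is \emph{deduced from} this proposition, not used to prove it, and the cycle machinery moreover requires $p\nmid m$ and $L_m\phi\not\equiv 0$, hypotheses absent from the statement. Your parenthetical that the case $p\mid m$ ``can be checked directly'' is also unjustified: there the Ramanujan congruence at $b$ constrains only $r\equiv\pm r_0\pmod p$ with $-r_0^2\equiv b$, while the target constrains all $r\not\equiv 0\pmod p$, which is the same propagation problem again.

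The missing idea is algebraic, not cyclic. Write the hypothesis as $\sum_{i=1}^{p-1}b^{p-1-i}L_m^i\phi\equiv 0\pmod p$ (your polynomial $P(L_m)\phi$). Each $L_m^i\phi$ is congruent to a genuine Jacobi form of weight $k+i(p+1)$ (Proposition \ref{Jacobi-filt}), and every even-weight Jacobi form of index $m$ decomposes uniquely as $\sum_{j=0}^m f_j\,(\phi_{-2,1})^j(\phi_{0,1})^{m-j}$ with $f_j$ elliptic modular forms; since the monomials $(\phi_{-2,1})^j(\phi_{0,1})^{m-j}$ are linearly independent over the ring of elliptic modular forms, the hypothesis becomes $\sum_{i=1}^{p-1}b^{p-1-i}F_{i,j}\equiv 0$ for each $j$, with $F_{i,j}$ of weight $k+i(p+1)+2j$. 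Now invoke the direct-sum decomposition of mod $p$ elliptic modular forms graded by weight modulo $p-1$: since $i(p+1)\equiv 2i\pmod{p-1}$, the indices $i$ and $i+\frac{p-1}{2}$ contribute to the same graded piece and distinct pairs to distinct pieces, so the sum vanishes if and only if $F_{i+\frac{p-1}{2},j}\equiv-\leg{b}{p}F_{i,j}$ for all $i,j$; taking $i=1$ and reassembling gives the claimed congruence. This grading argument is what replaces your stability-of-$S$ step, and without it (or a genuine substitute) your proposal does not prove the proposition.
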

\begin{proof}
If $\phi \in \Z_{(p)}[\![q,\zeta]\!]$ and $f\in \Z_{(p)}[\![q]\!]$, then $L_{m}(f\phi) = L_{m}(f)\phi + fL_{m}(\phi)$.  This implies
\begin{align*}
L_m^{p-1} \left(q^{-\frac{b}{4m}}\phi\right) &= \sum_{i=0}^{p-1} \binom{p-1}{i} L_m^{p-1-i}\left(q^{-\frac{b}{4m}}\right) L_m^i\phi\\
&=\sum_{i=0}^{p-1} \binom{p-1}{i} \left(-b\right)^{p-1-i}q^{-\frac{b}{4m}}L_m^i\phi\\
&\equiv q^{-\frac{b}{4m}} \sum_{i=0}^{p-1} b^{p-1-i}L_m^i\phi\pmod{p}.
\end{align*}
In particular, $\phi$ has a Ramanujan-type congruence at $b\not\equiv 0\pmod{p}$ if and only if
\begin{align}
 0\equiv \sum_{i=1}^{p-1} b^{p-1-i}L_m^i\phi\pmod{p}.\label{KO eqn 1}
\end{align}
Let $M_{k}^{(1)}$ denote the space of elliptic modular forms of weight $k$.  Recall that every even weight $\phi \in J_{k,m}$ with $p$-integral coefficients can be written as 
\begin{equation*}
\phi=\sum_{j=0}^{m} f_{j} (\phi_{-2,1})^{j}(\phi_{0,1})^{m-j},
\end{equation*}
where $\phi_{-2,1}(\tau,z)\in\Z[[q,\zeta]]$ and $\phi_{0,1}(\tau,z)\in\Z[[q,\zeta]]$ are weak Jacobi forms of index $1$ 
and weights $-2$ and $0$, respectively, and where each $f_j\in M_{k+2j}^{(1)}$ has $p$-integral rational coefficients and is uniquely determined 
(see $\S8$ and $\S9$ of \cite{EZ} for details and also for the corresponding result for Jacobi forms of odd weight).  Furthermore, by Proposition \ref{Jacobi-filt}, for every $i$ there exists $\psi_{i}\in J_{k+i(p+1),m}$ such that $L_{m}^{i}\phi\equiv \psi_{i} \pmod{p}$.  Hence there exist $F_{i,j} \in M_{k+i(p+1) + 2j}^{(1)}$ with $p$-integral rational coefficients such that
\begin{align*}
L_{m}^{i}\phi \equiv \psi_{i} \equiv \sum_{j=0}^{m} F_{i,j}(\phi_{-2,1})^{j}(\phi_{0,1})^{m-j} \pmod{p}
\end{align*}
and hence (\ref{KO eqn 1}) is equivalent to
\begin{align*}
0&\equiv \sum_{j=0}^{m} \left( \sum_{i=1}^{p-1} b^{p-1-i}F_{i,j} \right)(\phi_{-2,1})^{j}(\phi_{0,1})^{m-j} \pmod{p}.
\end{align*}
Since $(\phi_{-2,1})^{j}(\phi_{0,1})^{m-j}$ are linearly independent over $M_{*}^{(1)}$, we deduce that (\ref{KO eqn 1}) is equivalent to $\sum_{i=1}^{p-1} b^{p-1-i}F_{i,j}\equiv 0\pmod{p}$ for every $j$.  Elliptic modular forms modulo $p$ have a natural direct sum decomposition (see Section 3 of \cite{SwD-l-adic} or Theorem 2 of \cite{Serre-Bourbaki}) graded by their weights modulo $p-1$.  Thus (\ref{KO eqn 1}) is equivalent to 
\begin{align*}
0\equiv b^{p-1-i}F_{i,j} + b^{(p-1)/2-i}F_{i+(p-1)/2,j}\pmod{p}
\end{align*}
and hence also
\begin{align*}
F_{i+(p-1)/2,j} \equiv -\leg{b}{p}F_{i,j} \pmod{p}
\end{align*}
for all $0\leq j \leq m$ and $1\leq i \leq \frac{p-1}{2}$.  This implies, for all $1\leq i \leq \frac{p-1}{2}$,
\begin{align*}
L_{m}^{i + \frac{p-1}{2}} \phi &\equiv \sum_{j=0}^{m} F_{i+\frac{p-1}{2},j}(\phi_{-2,1})^{j}(\phi_{0,1})^{m-j}\\
&\equiv \sum_{j=0}^{m} -\leg{b}{p}F_{i,j}(\phi_{-2,1})^{j}(\phi_{0,1})^{m-j}\\
&\equiv -\leg{b}{p}L_{m}^{i} \phi  \pmod{p}.
\end{align*}
We conclude that
\begin{align*}
L_{m}^{\frac{p+1}{2}} \phi \equiv -\leg{b}{p} L_{m} \phi \pmod{p},
\end{align*}
which completes the proof.
\end{proof}

\vspace{1ex}

By (\ref{heat-action}), $L_{m}^{p} \phi \equiv L_{m} \phi \pmod{p}$.  We call $L_{m}\phi, L_{m}^{2}\phi,\dots, L_{m}^{p-1} \phi$ the \textit{heat cycle} 
of $\phi$ and we say that $\phi$ is in its own heat cycle whenever $L_{m}^{p-1}\phi\equiv \phi\pmod{p}$.  Assume $L_{m}\phi\not\equiv 0\pmod{p}$ and $p\nmid m$.  By Proposition \ref{Jacobi-filt}, applying $L_{m}$ to $\phi$ increases the filtration of $\phi$ by $p+1$ except when $\Omega(\phi)\equiv \frac{p+1}{2}\pmod{p}$.  If $\Omega\left(L_{m}^{i}\phi\right)\equiv\frac{p+1}{2}\pmod{p}$, then call $L_{m}^{i}\phi$ a \textit{high point} and $L_{m}^{i+1}\phi$ a 
\textit{low point} of the heat cycle.  By Propositions~\ref{wt for cong} and~\ref{Jacobi-filt},
\begin{align}\label{s eqn}
\Omega\left( L_{m}^{i+1}\phi \right) = \Omega\left( L_{m}^{i}\phi \right) +p+1 -s(p-1)
\end{align}
where $s\geq 1$ if and only if $L_{m}^{i}\phi$ is a high point and $s=0$ otherwise.  The structure of the heat cycle of a Jacobi form is similar 
to the structure of the theta cycle of a modular form (see $\S 7$ of \cite{Joch-1982}).  We will now prove a few basic properties:

\vspace{1ex}

\begin{lemma}\label{hc basics lemma}
Let $\phi\in\widetilde{J}_{k,m}$ with $p\nmid m$ a prime such that $L_{m}\phi\not\equiv 0\pmod{p}$.
\begin{enumerate}
\item If $j\geq 1$, then $\Omega\left(L_{m}^{j}\phi\right)\not\equiv \frac{p+3}{2}\pmod{p}$.
\item The heat cycle of $\phi$ has a single low point if and only if there is some $j\geq 1$ with $\Omega\left(L_{m}^{j}\phi\right) \equiv \frac{p+5}{2}\pmod{p}$.  Furthermore, $L_{m}^{j}\phi$ is the low point.\label{lem: only one low point}
\item If $j\geq 1$, then $\Omega\left(L_{m}^{j+1}\phi\right) \neq \Omega\left(L_{m}^{j}\phi\right) +2$.
\item The heat cycle of $\phi$ either has one or two high points.
\end{enumerate}
\end{lemma}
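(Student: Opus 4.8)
The plan is to reduce all four assertions to the transition formula (\ref{s eqn}), read modulo $p$, together with one global identity coming from periodicity; throughout I count high and low points as \emph{positions} in the length-$(p-1)$ heat cycle $L_m\phi,\dots,L_m^{p-1}\phi$. Writing $w_i:=\Omega(L_m^i\phi)$ and $r_i:=w_i\bmod p$, formula (\ref{s eqn}) reads $w_{i+1}=w_i+(p+1)-s_i(p-1)$, where $s_i\ge1$ exactly at a high point (so $r_i=\frac{p+1}{2}$) and $s_i=0$ otherwise. Since $p+1\equiv1$ and $p-1\equiv-1\pmod p$, this gives $r_{i+1}\equiv r_i+1+s_i\pmod p$; in particular, \emph{between} high points the residues climb by exactly $1$ at each step. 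Because $L_m^p\phi\equiv L_m\phi\pmod p$, summing (\ref{s eqn}) over $i=1,\dots,p-1$ telescopes the left side to $0$, which yields
\begin{equation*}
\sum_{\text{high points}}s_i=p+1;
\end{equation*}
in particular there is at least one high point.

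The key step, which I expect to be the main obstacle, is the count in (4). I would label the high points cyclically $i_1,\dots,i_h$ and examine the arc from $i_t$ to the next high point $i_{t+1}$. The successor $i_t+1$ is a low point with residue $\frac{p+3}{2}+s_{i_t}\pmod p$, and from there the residues climb by $1$ until they first meet $\frac{p+1}{2}$ at $i_{t+1}$; since one traversal uses only $p-1<p$ steps, this pins the arc length to $1+\big((-1-s_{i_t})\bmod p\big)=1+\big(p-1-(s_{i_t}\bmod p)\big)$. Summing arc lengths to $p-1$ gives
\begin{equation*}
\sum_{t=1}^{h}(s_{i_t}\bmod p)=(h-1)p+1.
\end{equation*}
Comparing this with $\sum s_{i_t}=p+1$ and using $s_{i_t}\bmod p\le s_{i_t}$ forces $(h-1)p\le p$, hence $h\le2$, proving (4). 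The delicate point is precisely that a priori $s_{i_t}$ could exceed $p$, so that $s_{i_t}\bmod p\ne s_{i_t}$; the displayed inequality is what controls this. The comparison also records the only possibilities: if $h=1$ then $s_{i_1}=p+1$, while if $h=2$ then both $s_{i_t}\le p-1$, whence (since they sum to $p+1$) each $s_{i_t}\ge2$. In every case, then, each high point in the cycle has $s_i\ge2$.

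The remaining parts are now read off from the residues occurring in the cycle. For (3): a non-high step raises the filtration by $p+1\ge6$, while at a high point the change is $(p+1)-s_i(p-1)\le3-p<2$ because $s_i\ge2$; hence no step changes the filtration by exactly $2$. For (1): the jump out of any high point moves the residue from $\frac{p+1}{2}$ to $\frac{p+1}{2}+(1+s_i)\pmod p$ with $1+s_i\ge3$ (or $\equiv2$ when $s_i=p+1$), so it always skips $\frac{p+3}{2}=\frac{p+1}{2}+1$; and since the intervening increment steps halt the moment they first reach $\frac{p+1}{2}$, the value $\frac{p+3}{2}$ is never produced. Thus $r_i\ne\frac{p+3}{2}$ for all $i\ge1$.

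Finally (2): the low points are exactly the successors of high points, so there is a single low point iff $h=1$. When $h=1$ the unique jump is by $1+s_{i_1}=p+2\equiv2\pmod p$, landing the low point at residue $\frac{p+1}{2}+2=\frac{p+5}{2}$, which thereafter occurs nowhere else since the increments only rise to $\frac{p+1}{2}$; when $h=2$ each jump is by $1+s_i\ge3$ (mod $p$), so no low point has residue $\frac{p+5}{2}$ and, as before, the increments cannot reach it either. Hence residue $\frac{p+5}{2}$ appears for some $j\ge1$ iff $h=1$, and in that case the form carrying it is the unique low point, which is exactly (2). I would close by noting that the transition \emph{into} $L_m\phi$ from $\phi=L_m^0\phi$ is not part of the cycle sum, which is why each assertion is correctly restricted to $j\ge1$.
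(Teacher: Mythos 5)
Your argument is correct, and it rests on the same two pillars as the paper's proof: the residue recursion $\Omega(L_m^{i+1}\phi)\equiv\Omega(L_m^{i}\phi)+1+s_i\pmod p$ coming from (\ref{s eqn}), and the telescoping identity $\sum s_i=p+1$ forced by the periodicity $L_m^{p}\phi\equiv L_m\phi$. The difference is the order of deduction. The paper proves parts (1), (2), (3) first by short direct arguments --- residue $\tfrac{p+3}{2}$ forces $p-1$ consecutive rises and residue $\tfrac{p+5}{2}$ forces $p-2$ of them, each of which is then played off against periodicity --- and only then feeds the bound $s_j\ge 2$ obtained from part (3) into the arc-length count $i_{j+1}-i_j=p-s_j$ for part (4). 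You invert this: you prove part (4) first and unconditionally, running the arc-length sum with $s_{i_t}\bmod p$ in place of $s_{i_t}$ and using $s\bmod p\le s$ to squeeze out $h\le 2$, which yields $s_i\ge 2$ at every high point as a by-product; parts (1)--(3) are then read off from the resulting picture of which residues occur. Your version makes explicit a point the paper handles only implicitly, namely that a priori $s_j$ could exceed $p$ so that the ``number of rises before the next fall'' is $(-1-s_j)\bmod p$ rather than $p-1-s_j$ (the paper excludes this only because it already has $s_j\ge2$ for each of the $t\ge2$ high points before summing). The paper's route gets parts (1) and (2) more cheaply and avoids routing everything through the structure theorem. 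Both arguments are sound.
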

\begin{proof}
\begin{enumerate}
\item If $\Omega\left(L_{m}^{j}\phi\right)\equiv \frac{p+3}{2}\pmod{p}$, then by (\ref{s eqn}) for $1\leq n \leq p-1$ we have 
\begin{align*}\Omega\left(L_{m}^{j+n}\phi\right) = \Omega\left(L_{m}^{j}\phi\right) + n(p+1).\end{align*}  

\noindent
In particular, $L_{m}^{j+p-1}\phi\not\equiv L_{m}^{j}\phi\pmod{p}$, which is impossible.

\item If $\Omega\left(L_{m}^{j}\phi\right)\equiv \frac{p+5}{2}\pmod{p}$, then by (\ref{s eqn}), for $1\leq n \leq p-2$ we have
\begin{align*}
\Omega\left(L_{m}^{j+n}\phi\right) = \Omega\left(L_{m}^{j}\phi\right) + n(p+1)
\end{align*}
and
\begin{align*}
\Omega\left(L_{m}^{j}\phi\right)= \Omega\left(L_{m}^{j+p-1}\phi\right) = \Omega\left(L_{m}^{j}\phi\right) + (p-1)(p+1) - s(p-1)
\end{align*}
where $s$ must be $p+1$ and there can be no other low point.  On the other hand, if there is a single low point, then the filtration must increase $p-2$ consecutive times.  The only way this is possible is if the low point has filtration $\frac{p+5}{2}\pmod{p}$.

\item By Proposition \ref{Jacobi-filt}, $\Omega\left(L_{m}^{j+1}\phi\right) = \Omega\left(L_{m}^{j}\phi\right) +2$ can only happen when 
$\Omega\left(L_{m}^{j}\phi\right)\equiv \frac{p+1}{2}\pmod{p}$.  Suppose $\Omega\left(L_{m}^{j+1}\phi\right) = \Omega\left(L_{m}^{j}\phi\right) +2 \equiv \frac{p+5}{2}\pmod{p}$.  By part (\ref{lem: only one low point}), this implies that the filtration increases $p-2$ more times before falling.  Hence $L_{m}^{j+p-1}\phi\not\equiv L_{m}^{j}\phi\pmod{p}$, which is impossible.\label{lem: no rise by 2}

\item Suppose there are $t\geq 2$ high points $L_{m}^{i_{j}}\phi$ where $1\leq i_{1} < \cdots < i_{t}\leq p-1$.  By (\ref{s eqn}) and part (\ref{lem: no rise by 2}) above, there are $s_{j}\geq 2$ such that
\begin{align}\label{lem: sj defn}
\Omega\left(L_{m}^{i_{j}+1}\phi\right)=\Omega\left(L_{m}^{i_{j}}\phi\right) +p+1-s_{j}(p-1).
\end{align}
Hence
\begin{align*}
\Omega\left(L_{m}\phi\right)=\Omega\left(L_{m}^{p}\phi\right) = \Omega\left(L_{m}\phi\right) + (p-1)(p+1) -\sum_{j=1}^{t} s_{j}(p-1),
\end{align*}
and so $\sum s_{j}=p+1$.  By (\ref{lem: sj defn}), $\Omega\left(L_{m}^{i_{j}+1}\phi\right)\equiv \frac{p+1}{2} + 1+s_{j}\pmod{p}$ and so there will be $p-1-s_{j}$ increases before the next fall.  That is, for $1\leq j \leq t$, $i_{j+1}-i_{j}=p-s_{j}$ where we take $i_{t+1}=i_{1}+p-1$ for convenience.  Thus
\begin{align*}
p-1 = i_{t+1}-i_{1}=\sum_{j=1}^{t} (i_{j+1}-i_{j}) = \sum_{j=1}^{t} (p-s_{j}) = tp -(p+1),
\end{align*}
i.e., $t=2$.  We conclude that the heat cycle of $\phi$ has at most two (i.e., one or two) high points.
\end{enumerate}
\end{proof}

\vspace{1ex}

The following Corollary of Proposition \ref{KO prop} is a key ingredient in the proof of Proposition \ref{non-Jacobi} below.

\vspace{1ex}

\begin{cor}\label{KO cor}
If $\phi\in\widetilde{J}_{k,m}$ has a Ramanujan-type congruence at $b\not\equiv 0\pmod{p}$ and $L_{m}\phi\not\equiv0\pmod{p}$, then the heat cycle of $\phi$ has two low points which both have filtration congruent to $2\pmod{p}$.
\end{cor}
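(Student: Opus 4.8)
The plan is to convert the Ramanujan-type congruence into a symmetry of the filtrations around the heat cycle, via Proposition \ref{KO prop}, and then combine that symmetry with the structural facts in Lemma \ref{hc basics lemma} (so that, as there, we keep the standing hypothesis $p\nmid m$ under which the heat cycle and its high/low points are defined).

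First I would record what Proposition \ref{KO prop} gives: the hypothesis yields $L_m^{\frac{p+1}{2}}\phi \equiv -\leg{b}{p}L_m\phi \pmod p$, and applying $L_m^{i-1}$ to both sides produces $L_m^{i+\frac{p-1}{2}}\phi \equiv -\leg{b}{p}L_m^i\phi \pmod p$ for every $i\geq 1$. Because $\leg{b}{p}=\pm 1$, $L_m\phi\not\equiv 0\pmod p$, and $L_m^p\phi\equiv L_m\phi\pmod p$, no form in the heat cycle is $\equiv 0\pmod p$, so Proposition \ref{wt for cong} applies to the last congruence and gives $\Omega\!\left(L_m^{i+\frac{p-1}{2}}\phi\right)=\Omega\!\left(L_m^i\phi\right)$ for all $i$. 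Thus the filtrations around the cycle are periodic with period $\frac{p-1}{2}$.

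I would then count the high points. If $L_m^i\phi$ is a high point, i.e.\ $\Omega(L_m^i\phi)\equiv\frac{p+1}{2}\pmod p$, then the periodicity forces $\Omega(L_m^{i+\frac{p-1}{2}}\phi)\equiv\frac{p+1}{2}\pmod p$, so $L_m^{i+\frac{p-1}{2}}\phi$ is also a high point; since $0<\frac{p-1}{2}<p-1$ these two indices are distinct modulo $p-1$. Hence high points occur in pairs lying $\frac{p-1}{2}$ apart. Lemma \ref{hc basics lemma}(4) says the cycle has one or two high points, and the pairing rules out a single high point, so there are exactly two, at indices $i_1$ and $i_2=i_1+\frac{p-1}{2}$, and correspondingly exactly two low points.

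Finally I would compute the filtration of the low points. Running from index $i_1$ to index $i_2$ uses $L_m$ exactly $\frac{p-1}{2}$ times and passes exactly one high point (namely $i_1$), so (\ref{s eqn}) together with the periodicity gives $\frac{p-1}{2}(p+1)-s_1(p-1)=\Omega(L_m^{i_2}\phi)-\Omega(L_m^{i_1}\phi)=0$, whence $s_1=\frac{p+1}{2}$, and symmetrically $s_2=\frac{p+1}{2}$. Since $\Omega(L_m^{i_j}\phi)\equiv\frac{p+1}{2}\pmod p$, reducing (\ref{s eqn}) modulo $p$ gives
\[
\Omega\!\left(L_m^{i_j+1}\phi\right)\equiv \tfrac{p+1}{2}+(p+1)-\tfrac{p+1}{2}(p-1)\equiv 2 \pmod p,
\]
so both low points have filtration $\equiv 2\pmod p$. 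I expect the counting step to be the main obstacle: everything downstream, including the values $s_j=\frac{p+1}{2}$ and hence the congruence $\equiv 2\pmod p$, depends on extracting the \emph{exact} number of high points and their symmetric placement from the filtration periodicity, rather than merely the bound from Lemma \ref{hc basics lemma}(4).
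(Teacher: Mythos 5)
Your proof is correct and takes essentially the same route as the paper's: both convert Proposition \ref{KO prop} into the filtration identity $\Omega\big(L_m^{i+\frac{p-1}{2}}\phi\big)=\Omega\big(L_m^{i}\phi\big)$, combine it with Lemma \ref{hc basics lemma}(4) to conclude there is exactly one fall in each half of the heat cycle, and then count the $\frac{p-3}{2}$ rises between a low point and the next high point to obtain filtration $\equiv 2\pmod{p}$. The only difference is bookkeeping: you derive the periodicity for all $i$ and compute $s_j=\frac{p+1}{2}$ explicitly, whereas the paper uses only the instance $i=1$ and leaves the fall sizes implicit.
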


\begin{proof}
Since $L_{m}^{\frac{p+1}{2}}\phi \equiv -\leg{b}{p}L_{m}\phi \pmod{p}$, we have $\Omega\left( L_{m}^{\frac{p+1}{2}}\phi \right) = \Omega \left( L_{m}\phi \right) = \Omega \left( L_{m}^{p}\phi \right)$.  Hence there is a fall in the first half of the heat cycle and in the second half of the heat cycle.  Furthermore, after a low point, the filtration increases $\frac{p-3}{2}$ times and then falls once.  Thus, the filtration of the low points is $2\pmod{p}$.
\end{proof}

Our final result in this section gives the non-existence of Ramanujan-type congruences of Jacobi forms.

\vspace{1ex}

\begin{prop}
\label{non-Jacobi}
Let $\phi\in \widetilde{J}_{k,m}$ where $k\geq 4$, $L_{m}\left( \phi \right)\not\equiv 0\pmod{p}$ and let $b\not\equiv 0\pmod{p}$.  
If $p>k$ and $p\nmid m$, then $\phi$ does not have a Ramanujan-type congruence at $b\pmod{p}$.
\end{prop}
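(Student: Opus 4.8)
The plan is to argue by contradiction, extracting the full structure of the heat cycle and then showing it is incompatible with the bound $p>k$. So suppose $\phi$ does have a Ramanujan-type congruence at $b\not\equiv 0\pmod p$. Since $L_m\phi\not\equiv 0\pmod p$, Corollary~\ref{KO cor} applies and tells us that the heat cycle of $\phi$ has two low points, each of filtration $\equiv 2\pmod p$. Because the minimum filtration of the cycle is attained at a low point (filtration only rises between a low point and the next high point), the cycle minimum $w$ satisfies $w\equiv 2\pmod p$. Moreover, since we work throughout with even weights, every filtration is even, so $w$ is even; together these give $w\equiv 2\pmod{2p}$.

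Next I would bound $w$ from above. Because $\phi\in\widetilde J_{k,m}$ we have $\Omega(\phi)\le k$, and Proposition~\ref{Jacobi-filt} gives $\Omega(L_m\phi)\le\Omega(\phi)+p+1\le k+p+1$. As $w$ is the cycle minimum, $w\le\Omega(L_m\phi)\le k+p+1\le 2p$, where the last inequality uses $p>k$. Combined with $w\equiv 2\pmod{2p}$ this forces $w=2$. Consequently the filtrations occurring in the cycle are exactly $2,\,p+3,\,2p+4,\dots$ (they increase by $p+1$ moving away from a low point), so the only cycle values that are $\le 2p$ are $2$ and $p+3$. In particular $\Omega(L_m\phi)\in\{2,\,p+3\}$.

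I would then eliminate both possibilities with Sofer's Proposition~\ref{Sofer}, which (since $L_m\phi\in\widetilde J_{k+p+1,m}$) gives $\Omega(L_m\phi)\equiv k+p+1\equiv k+2\pmod{p-1}$. If $\Omega(L_m\phi)=p+3\equiv 4\pmod{p-1}$, then $k\equiv 2\pmod{p-1}$; but no $k$ with $4\le k\le p-1$ satisfies this, as the nearest candidates $k=2$ and $k=p+1$ are excluded by $k\ge4$ and $p>k$ respectively. If instead $\Omega(L_m\phi)=2$, then the equality criterion in Proposition~\ref{Jacobi-filt} must fail (otherwise $\Omega(L_m\phi)=\Omega(\phi)+p+1\ge p+1>2$); since $p\nmid m$ this forces $2\Omega(\phi)-1\equiv 0\pmod p$, i.e. $\Omega(\phi)=\tfrac{p+1}{2}$ as $\Omega(\phi)\le k<p$. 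Applying Sofer to $\phi$ and to $L_m\phi$ now gives $\tfrac{p+1}{2}\equiv k\pmod{p-1}$ and $2\equiv k+2\pmod{p-1}$; the latter yields $k\equiv 0\pmod{p-1}$, whence $\tfrac{p+1}{2}\equiv 0\pmod{p-1}$, which is impossible because $0<\tfrac{p+1}{2}<p-1$. Both cases being contradictory, $\phi$ has no Ramanujan-type congruence at $b$.

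I expect the delicate step to be the case $\Omega(L_m\phi)=2$: here the weight estimate alone is consistent, corresponding to the boundary weight $k=p-1$, and the weight bound must be supplemented by the exact equality condition of Proposition~\ref{Jacobi-filt} together with Sofer's congruence to produce the contradiction. It is worth noting how the hypotheses enter precisely as needed: $p\nmid m$ makes the filtration jump under $L_m$ controllable and pins $\Omega(\phi)=\tfrac{p+1}{2}$ in the hard case, $p>k$ supplies the inequality $w\le 2p$ (forcing $w=2$) and excludes $k=p+1$, and $k\ge4$ excludes $k=2$.
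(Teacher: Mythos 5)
Your argument is correct, and although it starts from the same key input as the paper's proof---Corollary~\ref{KO cor} together with the rigidity of the heat cycle expressed by (\ref{s eqn})---the way you reach the contradiction is genuinely different. The paper splits into two cases: for $k=\frac{p+1}{2}$ it shows the first application of $L_m$ falls to filtration $\frac{p+5}{2}$, so by part (2) of Lemma~\ref{hc basics lemma} there is only one low point, contradicting Corollary~\ref{KO cor}; for $k\neq\frac{p+1}{2}$ it uses the exact value $\Omega(L_m\phi)=k+p+1$ to see that some low point has filtration $k+p+1$ or $k$, whence $k\equiv 1$ or $2\pmod{p}$, impossible for $4\le k<p$. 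You avoid the case split entirely: the parity observation (all filtrations are even, by Proposition~\ref{Sofer}) upgrades the low-point congruence to $w\equiv 2\pmod{2p}$, the bound $k+p+1\le 2p$ pins the cycle minimum to exactly $2$ and forces $\Omega(L_m\phi)\in\{2,p+3\}$, and both values are then excluded by Sofer's congruence modulo $p-1$ rather than by reduction modulo $p$. Your route is a bit longer and must invoke the equality clause of Proposition~\ref{Jacobi-filt} in the $\Omega(L_m\phi)=2$ subcase, but it buys a uniform treatment of the weight $k=\frac{p+1}{2}$ and dispenses with Lemma~\ref{hc basics lemma}(2). One small imprecision: the second low point is only known to have filtration $\equiv 2\pmod{2p}$, not exactly $2$, so the asserted list ``$2,\,p+3,\,2p+4,\dots$'' of cycle filtrations is not justified as stated; however, since every cycle element lies a multiple of $p+1$ above its preceding low point and $\Omega(L_m\phi)\le 2p$, that preceding low point must have filtration $2$, and the conclusion $\Omega(L_m\phi)\in\{2,p+3\}$ stands.
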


\begin{proof}
Assume that $\phi$ has a Ramanujan-type congruence at $b\pmod{p}$.  First suppose $k=\frac{p+1}{2}$.  Then $\Omega\left(\phi\right) = \frac{p+1}{2}$ and so we must have $s\geq 1$ in (\ref{s eqn}).  Since we need $\Omega\left(L_{m}\phi\right)\geq 0$, we must have $s=1$ and hence $\Omega\left(L_{m}\phi\right) = \frac{p+5}{2}$.  But by Lemma \ref{hc basics lemma} (\ref{lem: only one low point}), this implies there is only one low point, contrary to Corollary~\ref{KO cor}.

Now suppose $k\neq \frac{p+1}{2}$.  Then $\Omega\left(L_{m}\phi\right) = k+p+1$.  There must be a low point of the heat cycle with filtration either $k+p+1$ or $k$.  By Corollary~\ref{KO cor}, either $k+1\equiv 2\pmod{p}$ or $k\equiv 2\pmod{p}$.  Both of these alternatives are impossible since $p>k\geq 4$.
\end{proof}

\vspace{1ex}

\section{Proof of Theorem \ref{main} and examples}

We employ the Fourier-Jacobi expansion of a Siegel modular form (as in \cite{C-C-R-Siegel}) to prove Theorem \ref{main}.  
Let $M_k^{(2)}$ denote the vector space of Siegel modular forms of degree $2$ and even 
weight $k$ 
(for details on Siegel modular forms, see for example Freitag~\cite{Frei_1} or Klingen~\cite{Klingen}).  

\begin{proof}[Proof of Theorem \ref{main}]
Let $F\in M_k^{(2)}$ be as in Theorem \ref{main} with Fourier-Jacobi expansion 
$\displaystyle F(\tau,z,\tau')= \sum_{m=0}^{\infty} \phi_m(\tau,z) e^{2\pi i m\tau'}$, i.e., $\phi_m\in J_{k,m}$.  
Let $b\not\equiv 0 \pmod{p}$. Then $F$ has a Ramanujan-type congruence at $b \pmod{p}$ if and only if for all $m$, 
$\phi_m$ has a Ramanujan-type congruence at $b$.  By Proposition \ref{KO prop}, it is equivalent that for all $m$
\begin{equation*}
L_m^{\frac{p+1}{2}}\phi_m\equiv-\leg{b}{p}L_m\phi_m\pmod{p},
\end{equation*}
which is equivalent to (\ref{Rama-Siegel}), since 
$$
\mathbb{D}(F)=\sum_{m=0}^{\infty} L_m\left(\phi_m(\tau,z)\right) e^{2\pi i m\tau'}.
$$

Now we turn to the second part of Theorem \ref{main}.  Here we assume that $p>k$, $p\not= 2k-1$, and that there exists an $A(n,r,m)$ with 
$p \nmid \gcd(n,m)$ such that $A(n,r,m)\not \equiv 0 \pmod{p}$.  Suppose that $F$ has a Ramanujan-type congruence at $b \pmod{p}$.  Then all Fourier-Jacobi coefficients $\phi_m$ have such a congruence at $b$.  We would like to apply Proposition \ref{non-Jacobi}.  First, $k\geq 4$, 
since $F$ is non-constant and $M_k^{(2)}\subset\C$ if $k<4$.  Moreover, if $\phi_m\not\equiv 0\pmod{p}$ with $p\nmid m$, then $\Omega\left(\phi_m\right)=k$ by Proposition \ref{Sofer} (since $p>k$ and $F$ is non-constant modulo $p$) and  
$\Omega\left(L_m \phi_m\right)=k+p+1$ by Proposition \ref{Jacobi-filt}.  In particular, $L_m \phi_m \not \equiv 0 \pmod{p}$ and Proposition \ref{non-Jacobi} implies that such a $\phi_m$ does not have a Ramanujan-type congruence at $b \pmod{p}$.  Hence, if $p\nmid m$, then $\phi_m\equiv 0\pmod{p}$, i.e, $A(n,r,m)\equiv 0 \pmod{p}$.  By assumption, there exists an $A(n,r,m)\not\equiv 0 \pmod{p}$ with $p\nmid \gcd(n,m)$, which is only possible if $p\mid m$ and hence $p\nmid n$.  However, $F(\tau, z, \tau')=F(\tau',z,\tau)$ and $p\nmid n$ together yield the contradiction $A(n,r,m)=A(m,r,n)\equiv 0 \pmod{p}$.  We conclude that $F$ does not have a Ramanujan-type congruence at $b \pmod{p}$.

\end{proof}

\vspace{1ex}

We will use Theorem \ref{main} to discuss Ramanujan-type congruences for explicit examples of Siegel modular forms after reviewing a few facts on Siegel modular forms modulo $p$.  Set
$$
\widetilde{M}_k^{(2)}:=\left\{F \pmod{p} \,:\, F(Z)= \sum a(T)e^{\pi i\,tr(TZ)}\in M_k^{(2)} \,\mbox{ where } a(T)\in\Z_{(p)}\right\}.
$$
Recall the following two theorems on Siegel modular forms modulo $p$:

\vspace{1ex}
 
\begin{thm}[Nagaoka~\cite{Na-MathZ00}]
\label{Nagaoka} There exists an $E\in M_{p-1}^{(2)}$ with $p$-integral rational coefficients such that $E\equiv 1 \pmod{p}$.  Furthermore, if $F_1\in M_{k_1}^{(2)}$ and $F_2\in M_{k_2}^{(2)}$ have $p$-integral rational coefficients where
$0\not\equiv F_1\equiv F_2 \pmod{p}$, then $k_1\equiv k_2 \pmod{p-1}$.
\end{thm}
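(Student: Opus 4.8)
The plan is to establish the two assertions separately: the existence of $E$ will come from the degree-$2$ Siegel Eisenstein series of weight $p-1$, and the weight rigidity modulo $p-1$ will be reduced, through the Fourier--Jacobi expansion, to the corresponding rigidity for Jacobi forms recorded in Proposition~\ref{Sofer}.

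For the existence, I would take $E:=E_{p-1}^{(2)}$, the degree-$2$ Siegel Eisenstein series of weight $p-1$ normalized so that $a(0)=1$. Since $p\geq 5$, the weight $p-1$ is even and $p-1\geq 4>3$, so the defining series converges and $E\in M_{p-1}^{(2)}$. It then remains to prove that $a(T)\equiv 0\pmod p$ for every $T\neq 0$. I would use the explicit formula for the Fourier coefficients (Kaufhold/Maass, or equivalently the Fourier--Jacobi/Cohen--Eisenstein description), in which every non-constant coefficient acquires an overall factor $1/\zeta(1-k)$ with $k=p-1$. Because $k=p-1\equiv 0\pmod{p-1}$, the von Staudt--Clausen theorem gives $\mathrm{ord}_p\big(\zeta(1-k)\big)=\mathrm{ord}_p(B_{p-1})=-1$, so $1/\zeta(1-k)$ supplies exactly one factor of $p$; the accompanying divisor sums and local density factors are $p$-integral, and the secondary value $\zeta(3-2k)=\zeta(5-2p)$ is itself $p$-integral because its index $2k-2=2p-4\equiv -2\pmod{p-1}$ is not divisible by $p-1$. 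A useful consistency check is that the zeroth Fourier--Jacobi coefficient of $E$ is the elliptic Eisenstein series $E_{p-1}^{(1)}$, which satisfies $E_{p-1}^{(1)}\equiv 1\pmod p$ by the classical von Staudt--Clausen congruence. Together these give $p\mid a(T)$ for all $T\neq 0$, hence $E\equiv 1\pmod p$.

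For the rigidity, write the Fourier--Jacobi expansions $F_1=\sum_m\phi_m e^{2\pi i m\tau'}$ and $F_2=\sum_m\psi_m e^{2\pi i m\tau'}$, so $\phi_m\in J_{k_1,m}$ and $\psi_m\in J_{k_2,m}$. After multiplying $F_1$ and $F_2$ by a common integer prime to $p$ we may assume all Fourier--Jacobi coefficients have coefficients in $\Z$. The hypothesis $0\not\equiv F_1\equiv F_2\pmod p$ then forces $\phi_m\equiv\psi_m\pmod p$ for every $m$ (comparing coefficients of $e^{2\pi i m\tau'}$) and $\phi_{m_0}\not\equiv 0\pmod p$ for at least one $m_0$. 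Applying Proposition~\ref{Sofer} to $0\not\equiv\phi_{m_0}\equiv\psi_{m_0}\pmod p$ with $\phi_{m_0}\in J_{k_1,m_0}$ and $\psi_{m_0}\in J_{k_2,m_0}$ yields $k_1\equiv k_2\pmod{p-1}$ at once.

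The main obstacle is the existence statement. Although its strategy mirrors the degree-$1$ argument of Serre and Swinnerton-Dyer, the required $p$-divisibility of the non-constant coefficients rests on the comparatively intricate explicit Fourier expansion of $E_{p-1}^{(2)}$; one must check that the local density factors are $p$-integral and, in particular, rule out any compensating negative power of $p$ entering through $\zeta(3-2k)$ when $p$ is irregular (so that $\zeta(3-2k)$ may itself be divisible by $p$). By contrast, the rigidity statement is essentially immediate once the Fourier--Jacobi reduction is combined with the already-cited Proposition~\ref{Sofer}.
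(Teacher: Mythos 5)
This theorem is quoted by the paper from Nagaoka~\cite{Na-MathZ00} without proof, so there is no internal argument to compare against; I can only assess your proposal on its own terms. Your proof of the second assertion (weight rigidity) is correct: extracting Fourier--Jacobi coefficients gives $\phi_m\equiv\psi_m\pmod{p}$ for every $m$ with $\phi_{m_0}\not\equiv 0$ for some $m_0$, and Proposition~\ref{Sofer} applied to the pair $\phi_{m_0}\in J_{k_1,m_0}$, $\psi_{m_0}\in J_{k_2,m_0}$ yields $k_1\equiv k_2\pmod{p-1}$. The preliminary rescaling to integral coefficients is a harmless technicality (the paper itself applies Proposition~\ref{Sofer} to forms with merely $p$-integral coefficients). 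This Fourier--Jacobi reduction is a perfectly legitimate route, arguably cleaner than arguing through the structure of the graded ring $M_*^{(2)}$ over $\Z_{(p)}$.

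The existence assertion, however, is not proved, and the gap you acknowledge is the entire content of that statement. The rank-one coefficients are indeed handled by $\operatorname{ord}_p\bigl(\zeta(2-p)^{-1}\bigr)=1$, which follows from von Staudt--Clausen. But for $\det T>0$ the Kaufhold--Maass formula expresses $a(T)$, up to normalization, as $\zeta(2-p)^{-1}\zeta(5-2p)^{-1}$ times a sum of values $d^{\,p-2}H\bigl(p-2,4\det T/d^{2}\bigr)$ of Cohen's function, and the two facts you would need --- that $\zeta(5-2p)^{-1}$ and the values $H(p-2,N)$ are $p$-integral --- are precisely what you leave unverified, and neither is automatic. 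By the Kummer congruence, $\operatorname{ord}_p\zeta(5-2p)=\operatorname{ord}_pB_{2p-4}=\operatorname{ord}_pB_{p-3}$, which is positive exactly for the irregular pairs $(p,p-3)$ (e.g.\ $p=16843$); for such primes $\zeta(5-2p)^{-1}$ contributes a factor $p^{-1}$ that cancels the gain from $\zeta(2-p)^{-1}$, so the naive count of $p$-powers fails. Moreover $H(p-2,N)=L(3-p,\chi_D)\cdot(\cdots)$ involves generalized Bernoulli numbers $B_{p-2,\chi_D}$ whose $p$-integrality is not guaranteed when $p\mid D$. Establishing $p\mid a(T)$ for all $T\neq 0$ therefore requires the finer congruences for Cohen's function that Nagaoka actually proves (or a different construction altogether, such as the theta-series argument of B\"ocherer--Nagaoka for general degree). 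As written, your proposal proves the rigidity statement and the rank-$\leq 1$ part of the existence statement, but not the existence of $E$ itself.
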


\vspace{1ex}

\begin{thm}[B\"ocherer and Nagaoka~\cite{Bo-Na-MathAnn07}]
\label{Bo-Na} If $F\in \widetilde{M}_k^{(2)}$, then $\mathbb{D}(F)\in \widetilde{M}_{k+p+1}^{(2)}$. 
\end{thm}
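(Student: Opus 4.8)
The plan is to transport the classical Serre--Swinnerton-Dyer argument for the elliptic theta operator $\theta=q\,\tfrac{d}{dq}$ to degree $2$. The starting point is that $\mathbb{D}$ is, up to the normalizing constant, the determinant of the symmetric matrix of Cauchy--Riemann operators $\partial=\left(\begin{smallmatrix}\partial_\tau & \tfrac12\partial_z\\ \tfrac12\partial_z & \partial_{\tau'}\end{smallmatrix}\right)$, so that it is the natural degree-$2$ analogue of $\theta$. Computing the transformation of $\det\partial$ under $\mathrm{Sp}_4(\Z)$ shows that, once suitable correction terms are supplied, $\mathbb{D}$ raises the weight by $2$. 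Accordingly, I would first write $\mathbb{D}(F)=G_0+(\text{correction terms})$, where $G_0$ is a genuine nearly holomorphic (Maass--Shimura) modular form of weight $k+2$, and where the correction terms are polynomials, with coefficients polynomial in $k$, in the entries of $Y^{-1}$ with $Y=\mathrm{Im}\,Z$. These $Y^{-1}$-terms are precisely the non-holomorphic pieces that prevent $\mathbb{D}(F)$ itself from being a holomorphic modular form.

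The second step is the reduction modulo $p$, and here I expect the decisive input to be the degree-$2$ analogue of the congruence $E_2\equiv E_{p+1}\pmod p$: the weight-$2$ quasimodular Eisenstein data encoding the $Y^{-1}$-corrections should be congruent modulo $p$ to honest Siegel modular forms of weight $p+1$. Granting this, every non-holomorphic correction term becomes, modulo $p$, an honest holomorphic Siegel modular form, and $\mathbb{D}(F)$ is congruent modulo $p$ to a sum of such forms. By construction their weights are all congruent to $k+2\equiv k+p+1\pmod{p-1}$, so I would finish by using the Eisenstein series $E\in M_{p-1}^{(2)}$ with $E\equiv 1\pmod p$ from Theorem \ref{Nagaoka} to multiply each term up to weight exactly $k+p+1$ without altering it modulo $p$, placing $\mathbb{D}(F)$ in $\widetilde{M}_{k+p+1}^{(2)}$.

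The main obstacle will be controlling the higher-order correction terms. Since $\det\partial$ is a second-order operator, expanding the corrected determinant produces a term quadratic in the corrections, proportional to $\det(Y)^{-1}$; after the replacement above this threatens to contribute weight $k+2p$, which exceeds the target $k+p+1$ and cannot be lowered by multiplying by $E$. Showing that this term is absent, vanishes modulo $p$, or can be rewritten in weight $\leq k+p+1$ is the delicate heart of the argument and is exactly where the precise form of the $E_2\equiv E_{p+1}$ analogue must be exploited. I note finally that the Fourier-Jacobi route is not enough on its own: writing $F=\sum_m\phi_m q'^m$ gives $\mathbb{D}(F)=\sum_m L_m(\phi_m)q'^m$, and Proposition \ref{Jacobi-filt} already shows each $L_m\phi_m$ is congruent modulo $p$ to a Jacobi form of weight $k+p+1$; but this controls the Fourier-Jacobi coefficients only individually and does not recover the full $\mathrm{Sp}_4(\Z)$-invariance modulo $p$ needed to glue them into a single Siegel modular form.
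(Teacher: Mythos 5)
This theorem is not proved in the paper at all: it is imported verbatim from B\"ocherer and Nagaoka, so there is no internal proof to match your attempt against. Judged on its own terms, your proposal is a program rather than a proof, and the missing step is exactly the one you flag yourself. When you complete $\mathbb{D}=\det\partial$ to a weight-raising Maass--Shimura operator, the correction is $\det\bigl(\partial+cY^{-1}\bigr)$-shaped, and expanding the determinant produces (i) the holomorphic term $\det(\partial)F$, (ii) terms linear in the entries of $Y^{-1}$ times first derivatives of $F$, and (iii) a term $c'\det(Y)^{-1}F$. Your replacement scheme, even granting a degree-$2$ analogue of $E_2\equiv E_{p+1}\pmod p$, sends each $Y^{-1}$-entry to something of weight $p+1$; the quadratic term (iii) then lands in weight $k+2(p+1)\equiv k+4\pmod{p-1}$, which is \emph{not} congruent to $k+p+1\equiv k+2\pmod{p-1}$ for $p\geq 5$. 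So this term cannot be absorbed by multiplying by powers of $E\in M_{p-1}^{(2)}$, and your sentence ``their weights are all congruent to $k+2\pmod{p-1}$'' is false for it. One must prove it vanishes modulo $p$ (or cancels against something), and that is the entire content of the theorem; asserting that this is ``the delicate heart'' does not discharge it. A further unaddressed difficulty is that $Y^{-1}$ is a \emph{matrix-valued} quasi-modular object of weight $2$ (it transforms with $(CZ+D)$ on both sides), so the needed ``$E_2\equiv E_{p+1}$ analogue'' is a vector-valued congruence whose existence you have not established.

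For comparison, the actual argument of B\"ocherer--Nagaoka sidesteps non-holomorphic corrections entirely. They apply a holomorphic Rankin--Cohen/Ibukiyama-type bilinear differential operator to the pair $(F,E)$, where $E\in M_{p-1}^{(2)}$ is the form with $E\equiv 1\pmod p$ of Theorem \ref{Nagaoka}. This bracket is by construction a holomorphic Siegel modular form of weight $k+(p-1)+2=k+p+1$ with $p$-integral coefficients; reducing modulo $p$, every term involving a derivative of $E$ vanishes (since all nonconstant Fourier coefficients of $E$ are divisible by $p$), and what survives is a constant times $\mathbb{D}(F)$. The only thing to check is that this constant is a $p$-adic unit. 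If you want to salvage your route, you would need to either prove the vanishing of the $\det(Y)^{-1}$-term modulo $p$ directly or switch to this bracket construction, which manufactures the holomorphic weight-$(k+p+1)$ representative in one step. Your closing observation that the Fourier--Jacobi expansion alone cannot prove the theorem is correct and worth keeping.
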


\vspace{1ex}

\noindent
Theorems \ref{Nagaoka} and \ref{Bo-Na} imply that that 
\begin{equation}
\label{zero}
G:=\mathbb{D}^{\frac{p+1}{2}}(F)+\left(\frac{b}{p}\right)\mathbb{D}(F)\in \widetilde{M}_{k+\frac{(p+1)^2}{2}}^{(2)}.
\end{equation}

\vspace{1ex}

Theorem \ref{main} states that $F\in \widetilde{M}_k^{(2)}$ has a Ramanujan-type congruence at $b\not\equiv 0\pmod{p}$ if and only if 
$G \equiv 0 \pmod{p}$ in (\ref{zero}).  One can apply the following analog of Sturm's theorem for Siegel modular forms of degree $2$ to verify 
that $G \equiv 0 \pmod{p}$ in (\ref{zero}) for concrete examples of Siegel modular forms. 

\vspace{1ex}

\begin{thm}[Poor and Yuen~\cite{P-Y-paramodular}]
\label{P-Y} Let $F=\sum a(T)e^{\pi i\,tr(TZ)}\in M_k^{(2)}$ be such that for all $T$ with dyadic trace $w(T)\leq\frac{k}{3}$ one has that $a(T)\in\Z_{(p)}$ and $a(T)\equiv 0 \pmod{p}$.  Then $F\equiv 0 \pmod{p}$. 
\end{thm}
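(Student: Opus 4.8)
The plan is to prove this Sturm-type bound by contraposition: assuming $F\not\equiv 0\pmod p$, I would produce a nonzero Fourier coefficient $a(T)$ with $w(T)\le k/3$. The one structural fact I would use repeatedly is that, since $k$ is even, the coefficients satisfy $a(\trans{U}TU)=a(T)$ for all $U\in \mathrm{GL}_2(\Z)$, and that the dyadic trace is itself a $\mathrm{GL}_2(\Z)$-invariant, being an explicit (piecewise-linear) function of the entries of a Minkowski-reduced representative $\left(\begin{smallmatrix} a & b/2\\ b/2 & c\end{smallmatrix}\right)$, $0\le b\le a\le c$. Thus I may always assume $T$ reduced and measure its size by $a$ and $c$. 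As a first reduction I would dispose of the rank-deficient coefficients: up to $\mathrm{GL}_2(\Z)$ the $T$ with $\det T=0$ are the $\mathrm{diag}(n,0)$, whose coefficients are the Fourier coefficients of $\Phi(F)\in M_k^{(1)}$ (the image under the Siegel operator) and for which $w(\mathrm{diag}(n,0))=n$. Since the classical Sturm bound in weight $k$ is $n\le k/12<k/3$, the hypothesis forces $\Phi(F)\equiv 0\pmod p$, so that $F$ is a cusp form modulo $p$ and only positive-definite $T$ remain.

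For the definite part I would use the Taylor development of $F$ in the variable $z$ about the diagonal locus $z=0$. Because $k$ is even only even powers of $z$ occur, and after the standard Cohen--Kuznetsov correction the $2\nu$-th Taylor coefficient is an honest modular form on $\HH\times\HH$ of weight $k+2\nu$ in each variable, i.e.\ an element $G_\nu\in M_{k+2\nu}^{(1)}\otimes M_{k+2\nu}^{(1)}$; its $(n,m)$-th coefficient is, up to the correction terms, $\sum_r (2\pi i\,r)^{2\nu}a\!\left(\left(\begin{smallmatrix} n & r/2\\ r/2 & m\end{smallmatrix}\right)\right)$. The development map $F\mapsto (G_\nu)_{\nu\ge 0}$ is injective, since the $G_\nu$ recover all Taylor coefficients and hence $F$ itself. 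Applying the two-variable analogue of Sturm's theorem to each $G_\nu$ (which requires control of the coefficients only for $n,m\le (k+2\nu)/12$) and inverting the finite Vandermonde-type system in $\nu$ that expresses the individual $a(T)$ through the $G_\nu$, I would conclude that if every $a(T)$ with $w(T)\le k/3$ vanishes modulo $p$, then every $G_\nu\equiv 0\pmod p$, whence $F\equiv 0\pmod p$.

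The step I expect to be the real obstacle is making the estimate uniform and extracting the precise constant $1/3$. For a reduced definite $T$ the relevant elliptic indices are $n=a$ and $m=c$, while detecting the dependence on the off-diagonal entry forces Taylor orders up to $2\nu\sim 4\sqrt{ac}$, hence elliptic data up to weight $k+4\sqrt{ac}$ and Sturm index $(k+4\sqrt{ac})/12$; the consistency requirement $c\le (k+4\sqrt{ac})/12$ together with $a\le c$ and $\det T\ge \tfrac{3}{4}ac$ is exactly the reduction-theoretic inequality that must be optimized to yield $a+c-b=w(T)\le k/3$. This is also where the dyadic trace is indispensable: a naive descent of a Fourier--Jacobi coefficient $\phi_m$ to vector-valued forms of level $4m$ gives a Sturm index growing with $m$, and it is only after repackaging the skew (large $|r|$) coefficients through $\mathrm{GL}_2(\Z)$-reduction that ``size'' is measured uniformly by $w(T)$. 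A final subtlety I would flag is that one genuinely needs the full Taylor development rather than the single restriction $F(\tau,0,\tau')$, because the latter has a kernel --- generated by the weight-$10$ cusp form cutting out the diagonal Humbert surface --- so that vanishing of the diagonal coefficients alone cannot force $F\equiv 0\pmod p$.
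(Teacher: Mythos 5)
First, a remark on scope: the paper does not prove this statement at all --- it is quoted verbatim from Poor and Yuen \cite{P-Y-paramodular} and used as a black box to certify the congruences in the table. So there is no internal proof to match. For the record, Poor and Yuen's argument is of a completely different character from yours: it treats $\nu(F):=\min\{w(T):a(T)\not\equiv 0\pmod p\}$ as a valuation-like function on the graded ring, exploiting the superadditivity of the dyadic trace under addition of semidefinite matrices (hence $\nu(F_1F_2)\geq\nu(F_1)+\nu(F_2)$) together with structural information about $M_*^{(2)}$; no Taylor development along $z=0$ and no elliptic Sturm bound enters.

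More importantly, your sketch contains a genuine gap --- one you partly flag in your last paragraph but do not close, and which I do not believe closes along these lines. To separate the individual coefficients $a(n,r,m)$ with fixed diagonal from the developed Taylor coefficients $G_\nu$, you must use $G_\nu$ for $\nu$ up to roughly $2\sqrt{nm}$, one order for each admissible $|r|$. But then the two-variable Sturm step for $G_\nu\in M_{k+2\nu}^{(1)}\otimes M_{k+2\nu}^{(1)}$ requires the vanishing modulo $p$ of \emph{all} coefficients of $G_\nu$ in the box $n,m\leq (k+2\nu)/12$, and the $(n,m)$-coefficient of $G_\nu$ at the corner of that box is a combination of $a(T)$ for $T=\left(\begin{smallmatrix}2n&r\\r&2m\end{smallmatrix}\right)$; already for $r=0$ this matrix is Minkowski reduced with $w(T)=2n+2m=(k+2\nu)/3$, which exceeds $k/3$ for every $\nu\geq 1$. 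Thus the hypothesis of the theorem does not feed the hypothesis of your Sturm step: the region of coefficients you need grows linearly in $\nu$, while the region you control is fixed at $w(T)\leq k/3$. No induction on $\nu$ rescues this, since $G_0\equiv 0$ only yields the vanishing of the sums $\sum_r c_0(r,n,m)\,a(n,r,m)$, not of the individual terms required at the next order. The inequality you propose to ``optimize'' ($c\leq(k+4\sqrt{ac})/12$ together with $\det T\geq\tfrac34 ac$) merely restates this obstruction rather than resolving it, so the constant $1/3$ --- indeed any finite bound --- is not obtained by this route. Your preliminary reductions (disposing of singular $T$ via the Siegel operator and the classical Sturm bound, and the observation that restriction to $z=0$ alone is insufficient because its kernel contains the ideal generated by $\chi_{10}$) are correct but do not touch the main difficulty.
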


\vspace{1ex}

{\bf Remark:}
{\it If $T=\left(\begin{smallmatrix}a & b \\b & c\end{smallmatrix}\right)>0$ is Minkowski reduced (i.e., $2|b|\leq a\leq c$), then 
$w(T) = a+c-|b|$.  For more details on the dyadic trace $w(T)$, see Poor and Yuen~\cite{P-Y-MathAnn00}.  
}

\vspace{1ex}

The following table gives all Ramanujan-type congruences at $b\not \equiv 0 \pmod{p}$ for Siegel cusp forms of weight 20 or less when $p \geq 5$.  Let $E_4, E_6, \chi_{10}$, and $\chi_{12}$ denote the usual generators of $M_k^{(2)}$ of weights $4$, $6$, $10$, and $12$, respectively, where the Eisenstein series $E_4$ and $E_6$ are normalized by $a\left(\left(\begin{smallmatrix} 0 & 0 \\ 0 & 0\end{smallmatrix}\right)\right)=1$ and where the cusp forms $\chi_{10}$ and $\chi_{12}$ are normalized by $a\left(\left(\begin{smallmatrix} 2 & 1 \\ 1 & 2\end{smallmatrix}\right)\right)=1$.  Cris Poor and David Yuen kindly provided Fourier coefficients up to dyadic trace $w(T)=74$ of the basis vectors for $M_k^{(2)}$ with $k\leq 20$.  We used Magma to check that $G\equiv 0\pmod{p}$ in (\ref{zero}) for each of the forms in (\ref{table}) below.  It is not difficult to verify that (up to scalar multiplication) no further Ramanujan-type congruences at $b \not \equiv 0 \pmod{p}$ exist for Siegel cusp forms of weights 20 or less.

\vspace{2ex}

\begin{equation}
\label{table}
\begin{tabular}{|c|c|}
\hline
&\\[-1.5ex]
&  $b\not\equiv 0 \pmod{p}$\\[1ex]
\hline
& \\[-1.5ex]
$\chi_{12}$ & $b\equiv 1,4 \pmod{5}$ \hspace{1ex} and \hspace{1ex} $b\equiv 2, 6, 7, 8, 10 \pmod{11}$ \\[1ex]
\hline
& \\[-1.5ex]
$E_4\chi_{12}$ & $b\equiv 1,4 \pmod{5}$\\[1ex]
\hline
& \\[-1.5ex]
$E_{4}\chi_{12}-E_6\chi_{10}$ & $b\equiv 3,5,6\pmod{7}$\\[1ex]
\hline
& \\[-1.5ex]
$E_6\chi_{12}$ & $b\equiv 1,4 \pmod{5}$\\[1ex]
\hline
& \\[-1.5ex]
$E_4^2\chi_{10}+7E_{6}\chi_{12}$ & $b\equiv 1,2,4,8,9,13,15,16\pmod{17}$\\[1ex]
\hline
& \\[-1.5ex]
$E_4^2\chi_{12}$ & $b\equiv 1,4 \pmod{5}$\\[1ex]
\hline
& \\[-1.5ex]
$\chi_{10}^{2}+2E_{4}^{2}\chi_{12}-2 E_4E_6\chi_{10}$  & $b\equiv 2,3,8,10,12,13,14,15,18\pmod{19}$\\[1ex]
\hline
\end{tabular}
\end{equation}

\vspace{2ex}

{\bf Remarks:} 

{\it
\begin{enumerate}

\item
For $\chi_{10}^{2}+2E_{4}^{2}\chi_{12}-2 E_4E_6\chi_{10}$ modulo $19$ we have $G\in \widetilde{M}_{220}^{(2)}$ in (\ref{zero}) and we really do need Fourier coefficients up to dyadic trace $w(T)=\frac{220}{3}$, i.e., up to $74$ in Theorem \ref{P-Y} to prove that $G\equiv 0 \pmod{19}$.

\item
For Siegel modular forms in the {\it Maass Spezialschar} one could decide the existence and non-existence of their Ramanujan-type congruences 
also using Propositions \ref{KO prop} and \ref{non-Jacobi} in combination with Maass' lift \cite{Maass-Spezial-I} (see also $\S6$ of \cite{EZ}).  However, Theorem \ref{main} is an essential tool in establishing such results for Siegel modular forms that are not in the Maass Spezialschar, 
such as $E_{4}^{2}\chi_{12}$ and $\chi_{10}^{2}+2E_{4}^{2}\chi_{12}-2 E_4E_6\chi_{10}$ for example.
\end{enumerate}
}

\vspace{1ex}

The following construction generates infinitely many Siegel modular forms with Ramanujan-type congruences.  Note that this construction also works for elliptic modular forms and for Jacobi forms by replacing $\D$ with $\Theta:=\frac{1}{2\pi i} \frac{d}{dz}$ and $L_{m}$, respectively.  For any $F\in \widetilde{M}_{k}^{(2)}$ and any prime 
$p\geq 5$, set
\begin{align*}
F_{0} &:= F - \D^{p-1}F \in \widetilde{M}_{k+ p^{2}-1}^{(2)}\\
F_{+1} &:= \frac{1}{2}\left( \D^{p-1}F + \D^{\frac{p-1}{2}}F \right)\in \widetilde{M}_{k+ p^{2}-1}^{(2)}\\
F_{-1} &:= \frac{1}{2}\left( \D^{p-1}F - \D^{\frac{p-1}{2}}F \right)\in \widetilde{M}_{k+ p^{2}-1}^{(2)}.
\end{align*}
Clearly $F=F_{0}+F_{+1}+ F_{-1}$ and if $F=\sum a(T)e^{\pi i\,tr(TZ)}$, then for $s=0,\pm 1$, one finds that
\begin{equation}
\label{Fs}
F_{s} = \sum_{\leg{\det(T)}{p}=s}a(T)e^{\pi i\,tr(TZ)}.
\end{equation}
Hence $F_{s}$ has Ramanujan-type congruences at all $b$ with $\leg{b}{p}\neq s$.  For example, if $F:=\chi_{10}^2$, then a computation (in combination with Theorem \ref{P-Y}) reveals that
\begin{align*}
F_0 &\equiv 3E_4^5\chi_{12}^2 	
   +2E_4^4E_6\chi_{10}\chi_{12}&&\pmod{5}\\
F_{+1} & \equiv E_4^6\chi_{10}^2+4E_4^3
   \chi_{10}^2 \chi_{12} + 4E_4^5 \chi_{12}^2
   + 2E_4^4 E_6 \chi_{10}\chi_{12} 
   + 3E_4^3 E_6^2 \chi_{10}^2    
   &&\pmod{5}\\
F_{-1} & \equiv E_4^3 \chi_{10}^2 \chi_{12} + 3E_4^5 \chi_{12}^2 + E_4^4 E_6 \chi_{10}\chi_{12}+ 
 2 E_4^3 E_6^2 \chi_{10}^2&&\pmod{5}.
\end{align*}
Since $E_{4}\equiv 1\pmod{5}$, we actually have $F_{0}\in \widetilde{M}_{28}^{(2)}$ and $F_{\pm 1}\in \widetilde{M}_{32}^{(2)}$.

\vspace{2ex}

\noindent
{\it Acknowledgments}:  The first author would like to thank Scott Ahlgren for all of his support and guidance.  Both authors thank Cris Poor and David Yuen for providing tables of Fourier coefficients of Siegel modular forms and for making their preprint on paramodular cusp forms available prior to publication.  

\bibliographystyle{acm}
\bibliography{Lit}

\end{document}